\newtheorem{Th}{Theorem}[section]
\newtheorem{Cor}{Corollary}[section]
\newtheorem{Lemma}{Lemma}[section]
\newtheorem{De}{Definition}[section]
\newtheorem*{Rem}{Remark}
\newcommand{\D}{{\mathbb D}}
\newcommand{\R}{{\mathbb R}}
\newcommand{\Z}{{\mathbb Z}}
\newcommand{\C}{{\mathbb C}}
\newcommand{\T}{{\mathbb T}}
\newcommand{\bD}{{\mathbb D}}
\newcommand{\cA}{\mathcal A}
\newcommand{\cC}{\mathcal C}
\newcommand{\cE}{\mathcal E}
\newcommand{\cJ}{{\mathcal J}}
 \newcommand{\cL}{\mathcal L} 
\newcommand{\cM}{\mathcal M} 
\newcommand{\cO}{\mathcal O}
\newcommand{\cR}{\mathcal R} 
\newcommand{\cS}{\mathcal S}
\newcommand{\cX}{\mathcal X}
\newcommand{\ga}{{\mathfrak a}}
\newcommand{\gb}{{\mathfrak b}}
\newcommand{\gm}{{\mathfrak m}}
\newcommand{\la}{\lambda}
\newcommand{\si}{\sigma}
\newcommand{\om}{\omega}
\newcommand{\te}{\theta}
\newcommand{\al}{\alpha}
\newcommand{\be}{\beta}
\newcommand{\di}{{\mathbf{\rm{diag}}}}
 \newcommand{\Res}{{\mathbf{\rm{Res}}}}
\newcommand{\beq}{\begin{equation}}
\newcommand{\eeq}{\end{equation}}
\newcommand{\ov}{\overline} 
 \newcommand{\beqq}{\begin{equation*}}
\newcommand{\eeqq}{\end{equation*}}
\newcommand{\lng}{\langle}
\newcommand{\rng}{\rangle}
\newcommand{\mbf}{\mathbf}
\newcommand{\mbfx}{\mathbf {x}} 
\newcommand{\supp}{\mbox{supp} \ }
\newcommand{\I}{\mathrm{i}}
\newcommand*{\mailto}[1]{\href{mailto:#1}{\nolinkurl{#1}}}
\newcommand{\msc}[1]{\href{http://www.ams.org/msc/msc2010.html?t=&s=#1}{#1}}
\numberwithin{equation}{section}
\begin{document}
\title{Discrete Multichannel Scattering  with step-like potential}
\author[I. Alvarez-Romero]{Isaac Alvarez-Romero}
\address{Department of Mathematical Sciences,
Norwegian University of
Science and Technology, NO--7491 Trondheim, Norway}
\email{\mailto{isaac.romero@math.ntnu.no}\\
\mailto{isaacalrom@gmail.com}}

\author[Yu. I. Lyubarskii]{Yurii  Lyubarskii}
\address{Department of Mathematical Sciences,
Norwegian University of
Science and Technology, NO--7491 Trondheim, Norway}
\email{\mailto{yura@math.ntnu.no}}

\thanks{{\it This research has been supported by the Norwegian Research Council project DIMMA 213638.}}

\keywords{inverse scattering problem, Jacobi matrices, quantum graphs}
\subjclass[2010]{Primary \msc{34L25}, \msc{81U40}; Secondary \msc{47B36}, \msc{05C50}}
\begin{abstract}
We study direct and inverse scattering problem for systems of interacting particles, having  web-like structure. Such systems  consist of a finite number of semi-infinite chains attached to the central part formed by a finite number of particles. We assume that  the semi-infinite channels are homogeneous at infinity, but the limit values of the coefficients may vary from one chain to another.
\end{abstract}

\maketitle

\medskip

\section{Introduction}
The aim of this article is to study the direct and  inverse   problems  for small oscillations near equilibrium position for a system of particles  $\cA=\{\alpha, \beta, \ldots \}$ which
interact with each other and perhaps with an external field. The interaction is described  by the matrix 
$$
\cL= \left (  L(\alpha, \beta)  \right )_{\alpha,\beta\in \cA}.
$$
We say that the particles $\alpha$ and $\beta$ interact with each other if $L(\alpha, \beta) \neq 0$ 
and we assume that each particle interacts with at most a finite number of its neighbours:  $\# \{ \beta, L(\alpha, \beta)\neq 0 \} < \infty$ for each $\alpha \in \cA$.\\
Our the system has a "web-like" structure: it includes  a finite set  of  "channels", i.e. semi-infinite chains of particles attached to a "central part"  formed by a finite number of interacting particles. \\ \\
Given a set of particles $\cX$ we denote by $\cM(\cX)$ and $l^2(\cX)$ the spaces of all functions on $\cX$ and square summable functions on $\cX$ respectively.\\ \\
The matrix $\cL$ is related to the  Hessian matrix of the potential energy near    equilibrium position, so we always assume that all $L(\alpha,\beta)$ are real and
\beqq
\cL > 0,
\eeqq
here $\cL$ is considered as an operator in $l^2(\cA)$.  In what follows we do not distinguish between matrices and the corresponding linear operators.\\ \\ 
After separation of variables one arrives to the spectral problem 
\beq
\label{eq:01}
\cL\xi = \lambda \xi; \ \xi=\{\xi(\alpha)\}
\in l^2(\cA).
\eeq
which can be considered as a discrete version of spectral problems for quantum graphs, see  \cite{C,GP,GS,PF} as well as later articles \cite{K,KN,KS}.\\ \\
The web-like structure of the system allows us to treat the spectral problem \eqref{eq:01} as a scattering problem: the points of continuos spectra correspond to frequences of incoming and outcoming waves which are propagating along the channels; the points of discrete spectra correspond to proper oscillations of the system. The spectral data includes transmission and reflection coefficients: we consider waves incoming along one of the channels and observe (at infinite ends of the channels) how do such waves come through the system. The direct problem is to determine the spectral data through the characteristics of the system. The inverse problem deals with recovering of characteristics of the system from  the spectral data. This is a classical setting of the scattering problem, see e.g. \cite{N}.\\ \\
This work is a continuation of  \cite{LM} which considers the case of the same wave propagation speed along all channels. Now we assume that each channel has its own speed. The problem then becomes  more complicated: different points of continuous  spectra may have different multiplicities; we do not have  a single scattering matrix for the whole spectra; besides  the generalized eigenfunctions may exponentially decay along some of the channels. These eigenfunctions  need to be treated specially, it does not make sense to observe the phase of a wave which decays exponentially at inifinity, just their amplitudes can be taken into account. Respectively, for such waves, only absolute values of the transmission coefficients can be included to spectral data.\\ \\
As in \cite{LM} the problem can be reduced to a system of difference Schr{\"o}dinger equations  on semi-infinite discrete string with the initial conditions related to the way the channels   are attached to the central part. In our setting the potentials have different limits for different equations in this system. In the classical case of infinite string this corresponds to a step-like potential, such problem for continuous case has been studied for example in \cite{BF, CK, GNP},   for the Jacobi operators the case of step-like quasi-periodic potential has been considered in      \cite{EMT}.  We modify techniques of these articles, especially those in \cite{CK}, in order to make it applicable for the graph setting. \\ \\
The scattering data are determined as the set of scattering coefficients,  eigenvalues, and also as normalization   constants of the corresponding eigenfunctions. These scattering data are associated with the data which can be measured in an experiment at infinite ends of channels. \\ \\
We solve the direct problem, i.e, description of scattering data from physical characteristic of the system, and (under additional assumptions) the inverse problem, i.e.  reconstruction the   characteristics of the channels from given spectral data, that is, we reconstruct the matrix $\cL$ on the channels. In order to make this reconstruction we reduce the problem to a discrete analog of Marchenko equation, which is known to have unique solution and actually admits numeric implementation.\\ \\
In this article we do not discuss possibility of reconstruction of  data which is related to the central part of the graph. In order to have such reconstruction, one needs to demand additional sparsity  conditions of the central part.  We refer  the reader to the recent book \cite{MS}, which contains examples of such conditions.\\ \\
The article is organised as follows: in the next section we describe the system, derive the \emph{boundary condition} which allows us to treat the problem as a system of equations on a string. In section \ref{sec3} we consider the characteristics of the channels, introduce the Jost solutions and also describe the spectral data related to the continuos spectra. In section \ref{sec4} we collect some known results, see e.g. \cite{LM1,VA,Te},  as well as some new ones about solutions of finite difference equations. These results will be used in the sequel. In particular in section \ref{sec:Construction} we construct the special solutions, i.e. solutions which correspond to wave incoming along one of the channels. Using these solutions we study the structure of discrete spectra, this is done in section \ref{SingU(k)}.\\
As it was already mentioned, no scattering matrix can be defined for the whole spectra, however the scattering coefficients possess some symmetry which plays the crucial role in our construction. This symmetry is described in section \ref{sec5}. In section \ref{sec8} we return to discrete spectra and connect each eigenvalue of the problem with normalized matrix of eigenfucntions which gives the energies, completing the set of spectral data.\\
In section \ref{sec9} we collect all previous results and finally obtain equations of the inverse scattering problem. Section \ref{sec10} contains some concluding remarks.

\medskip

\noindent 
{\bf Acknowledgments.}
We  thank V. A. Marchenko, who suggested the problem, and also for numerous fruitful discussions.

\section{Geometry  of the system and the boundary condition}\label{sec2}
We consider the systems $\cA$ which have a  "web-like" structure.   
Namely $\cA=\cA_0\cup \cA_1$ where $\cA_1$ is a  central part,  and $\cA_0$ is a union of a finite number of  semi-infinite channels.   
For such web-like  systems the   inverse spectral problem can be treated as an  inverse scattering problems: sending a wave along one of the channels 
and observing how does it pass through the system we are trying to reconstruct the characteristics of the system, i.e. the values $L(\alpha,\beta)$.

\medskip

\noindent{\bf Definition} A sequence of particles $\sigma=\{\alpha(p)\}_{p=0}^\infty$ is a channel if, for $p>0$ the particle  $\alpha(p)$ interacts with the particles  $\alpha(p-1)$ and $\alpha(p+1)$ only (and, perhaps with  the external field), while $\alpha(0)$ interacts with 
$\alpha(1)$ and some other particles in $\cA$ which do not belong to $\sigma$. 

\medskip

We will use the following notation:

\noindent - the set of all channels is $\cC$, the channels will be denoted   by $\sigma$, $\nu$, $\gamma$ etc.; 

\noindent - the particles in $\sigma \in \cC$ are $\sigma(0)$, $\sigma(1)$, $\sigma(2)$, $\ldots$,
 the point $\sigma(0)$ is called  the attachment point of $\sigma$;   

\noindent - $\Gamma:= \{\sigma(0)\}_{\sigma \in \cC}$, \ $\cA_0:=\cup_{\sigma\in \cC} \cup_{k=1}^\infty {\sigma(k)}$,  \ 
$\cA_1:= \cA \setminus \cA_0$.
 
\noindent - For $\sigma\in \cC$, $k=1,2,\ldots$ we also denote 
\begin{equation}\label{sec1:1.2}
-\gb_\sigma(k-1)=L(\sigma(k-1), \sigma(k)),\qquad  \ga_\sigma(k)=L(\sigma(k), \sigma(k)),
\end{equation}
so equation 
  \eqref{eq:01}  on the channel $\sigma$ takes the form:
  \beq
  \label{eq:02}
   -\gb_\sigma(k-1)\xi(\sigma(k-1)) +\ga_\sigma(k)\xi(\sigma(k))-\gb_\sigma(k)\xi(\sigma(k+1)) = \la \xi(\sigma(k)).
 \eeq

\noindent We assume that the number of particles in the central part as well as the number of channels is finite:
\begin{equation*}
M:=\sharp \mathcal{A}_1<\infty,\qquad \sharp \mathcal{C}<\infty
\end{equation*}
Also (for simplicity) we assume $\sigma(0)\neq\nu(0)$, $\sigma,\nu\in\mathcal{C}$, $\sigma\neq\nu$

\subsection{Boundary conditions}  
 
 Let $\xi \in \cM(\cA)$ be a solution to \eqref{eq:01}. Then it meets \eqref{eq:02}  and also, for each $\alpha \in \cA_1$, 
 \beqq
 \la \xi(\alpha) - \sum_{\beta\in \cA_1}L(\alpha,\beta) \xi(\beta) = \sum_{\beta\in \cA_0}L(\alpha,\beta) \xi(\beta)
 \eeqq  
 The only pairs $(\alpha,\beta)\in \cA_1\times \cA_0$ for which $L(\alpha,\beta)\neq 0$ are of the form $(\sigma(0),\sigma(1))$, $\sigma\in \cC$,
 so with account of \eqref{sec1:1.2} this relation can be written as 
 \beq
 \label{eq:04}
 \la \xi(\alpha) - \sum_{\beta\in \cA_1}L(\alpha,\beta) \xi(\beta) =
           \begin{cases}  
                       -\gb_\nu(0)\xi(\nu(1)), & \alpha=\nu(0)\in \Gamma, \\
                       0, & \alpha  \in \cA_1\setminus  \Gamma
           \end{cases}
 \eeq 
Consider the matrix
\beqq
\label{eq:05}
\cL_1= \left ( L(\alpha,\beta)\right )_{\alpha,\beta\in \cA_1}.
\eeqq
Being a truncation of $\cL$, the matrix $\cL_1$ also is strictly positive. Let $0<\la_1\leq \la_2 \leq \ \ldots \leq \la_M$ and $p_1, \ \ldots , p_M$ be 
its eigenvalues and the corresponding normalised eigenvectors.  We may choose $p_j$'s to be real-valued. For $\la\not\in \{\la_j\}_{j=1}^M$ the operator $\cL_1-\la I$ is invertible and 
\beqq
\label{eq:06}
\left ( \cL_1-\la I\right )^{-1}= R(\lambda)=\left ( r(\alpha,\beta;\la)  \right )_{\alpha,\beta \in \cA_1}, \quad 
            r(\alpha,\beta;\la) =\sum_{j=1}^M \frac {p_j(\alpha)p_j(\beta)}{\la_j-\la}     .        
\eeqq
Relation \eqref{eq:04} can be than written as
\beq
\label{eq:07}
\xi(\alpha)= \sum_{\nu\in \cC} r(\alpha, \nu(0); \la) \gb_\nu(0)\xi(\nu(1)); \ \alpha \in \cA_1.
\eeq
In particular for $\alpha=\sigma(0)$, $\sigma \in \cC$ we obtain
\beq
\label{eq:08} 
\xi(\sigma(0))= \sum_{\nu\in \cC} r(\sigma(0), \nu(0); \la) \gb_\nu(0)\xi(\nu(1)). 
\eeq
After introducing vector notations 
\beqq
\label{eq:08a}
\vec{\xi}(k):= (\xi_\si(k))_{\si \in \cC}, \ k=0,1, \ \ldots, \  \cR(\lambda)=(r(\sigma(0), \nu(0); \la))_{\si,\nu\in\cC}
\eeqq
we can rewrite \eqref{eq:08} as 
\beq
\label{eq:08b}
\vec{\xi}(0)=\cR(\la)\di\{\gb_\nu(0)\}_{\nu\in \cC}  \ \vec{\xi}(1).
\eeq
This relation connects the values of solution $\xi$ on $\cA_0$ and on $\Gamma$.  It plays the role of the boundary condition for vector-valued scattering problem.\\
The following statement holds.

\begin{Th}( See theorem 1 in \cite{LM})
Let $\la\not \in \{\la_j\}_{j=1}^M$. A function $\xi\in \cM(\cA_0\cup \Gamma)$ admits prolongation to a function $\xi \in \cM(\cA)$ satisfying 
\eqref{eq:01} if and only if it satisfies \eqref{eq:02}  and also the boundary condition \eqref{eq:08}. The prolongation is unique and can be defined by 
\eqref{eq:07}.
\end{Th}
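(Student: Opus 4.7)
The plan is to split \eqref{eq:01} into two coupled pieces: the channel recurrences, which live on $\cA_0\cup\Gamma$, and the interior equation on $\cA_1$. The ``glue'' between the two pieces is the invertibility of $\cL_1-\la I$, which is available precisely because $\la\notin\{\la_j\}_{j=1}^M$. For the \emph{only if} direction, assume $\xi\in\cM(\cA)$ solves \eqref{eq:01}. For $\alpha=\sigma(k)$ with $k\ge 1$, the definition of a channel means that $\alpha$ interacts only with $\sigma(k-1)$ and $\sigma(k+1)$, so \eqref{eq:01} specializes, by \eqref{sec1:1.2}, to \eqref{eq:02}. For $\alpha\in\cA_1$, the only nonzero $L(\alpha,\beta)$ with $\beta\in\cA_0$ occur when $\alpha=\nu(0)\in\Gamma$ and $\beta=\nu(1)$; separating those terms from the remaining interactions inside $\cA_1$ produces \eqref{eq:04}. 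Applying $R(\la)=(\cL_1-\la I)^{-1}$ to both sides of \eqref{eq:04} gives \eqref{eq:07}, and evaluating at $\alpha=\sigma(0)$ yields the boundary condition \eqref{eq:08}.

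For the \emph{if} direction, start with $\xi\in\cM(\cA_0\cup\Gamma)$ satisfying \eqref{eq:02} and \eqref{eq:08}, and use \eqref{eq:07} to define the prolongation on all of $\cA_1$. The candidate values on $\Gamma\subset\cA_1$ agree with the originally given values by virtue of \eqref{eq:08}, so the definition is consistent. It remains to verify \eqref{eq:01} pointwise. On each channel at $\sigma(k)$, $k\ge 1$, equation \eqref{eq:01} is identical to the assumed \eqref{eq:02}. For $\alpha\in\cA_1$, apply $\cL_1-\la I$ to the right-hand side of \eqref{eq:07}; since $(\cL_1-\la I)R(\la)=I$, one obtains
\[
\sum_{\beta\in\cA_1}L(\alpha,\beta)\xi(\beta)-\la\xi(\alpha)=\sum_{\nu\in\cC}\delta_{\alpha,\nu(0)}\,\gb_\nu(0)\xi(\nu(1)),
\]
which is precisely \eqref{eq:04}. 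Rewriting the right-hand side as $\sum_{\beta\in\cA_0}L(\alpha,\beta)\xi(\beta)$ (using $L(\nu(0),\nu(1))=-\gb_\nu(0)$ and the fact that no other $\cA_1\times\cA_0$ coupling exists), we recover \eqref{eq:01} at $\alpha$. For uniqueness, any admissible prolongation must satisfy \eqref{eq:04}, and invertibility of $\cL_1-\la I$ forces its values on $\cA_1$ to coincide with those produced by \eqref{eq:07}.

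The proof is essentially bookkeeping, so I do not anticipate a serious obstacle; what needs care is the signs and the identification of exactly which $(\alpha,\beta)\in\cA_1\times\cA_0$ contribute nonzero couplings. This is where the web-like geometry plays the decisive role: all interaction between $\cA_0$ and $\cA_1$ factors through the attachment points $\Gamma$, so the finite-dimensional resolvent $R(\la)$ can serve as a Dirichlet-to-Neumann map and \eqref{eq:08} becomes the natural boundary condition for the vector recurrence on the channels.
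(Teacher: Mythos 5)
Your proof is correct and follows essentially the same route the paper takes: Section~\ref{sec2} derives \eqref{eq:04}, \eqref{eq:07} and \eqref{eq:08} exactly by this decomposition into channel recurrences plus the interior equation on $\cA_1$, with the invertibility of $\cL_1-\la I$ for $\la\notin\{\la_j\}$ providing both the prolongation formula and uniqueness (the paper itself defers the converse direction to Theorem~1 of \cite{LM}, which is precisely your ``if'' argument). The only caveat is a cosmetic sign slip at the end: the right-hand side of your displayed identity equals $-\sum_{\beta\in\cA_0}L(\alpha,\beta)\xi(\beta)$, not $+\sum_{\beta\in\cA_0}L(\alpha,\beta)\xi(\beta)$, but this does not affect the conclusion that \eqref{eq:01} is recovered.
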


\section{Characteristics of the channels and spectral data}\label{sec3}

\subsection{Jost solutions }

We assume  that the channels are asymptotically homogeneous at infinity. Namely,
for each $\sigma \in \cC$ there exist $b_\sigma$ and $a_\sigma$ such that 
$$
\ga_\sigma(k) \to a_\sigma, \  \gb_\sigma(k) \to b_\sigma  \ \text {as} \ k\to \infty.
$$
Moreover
\beq
\label{eq:09}
\sum_{k=1}^\infty k \{ |\ga_\sigma(k)-a_\sigma| + |\gb_\sigma(k)-b_\sigma|\} < \infty.
\eeq 

This relation provides existence of Jost solutions on each channel $\sigma \in \cC$. It is well known, see e.g. \cite{MS,Te}, that  for each $\sigma\in \cC$ 
there is a family $\{e_\sigma(k, \theta)\}_{k=0}^\infty$ of functions holomorphic inside the open disk $\D$, continuous 
 up to the boundary and, for each $\theta\in \T$ and $k\geq 1$,
 \beq
  \label{eq:10}
   -\gb_\sigma(k-1)e_\sigma(k-1,\theta) +\ga_\sigma(k)e_\sigma(k,\theta)-\gb_\sigma(k)e_\sigma(k+1,\theta) = \la_\sigma(\theta)e_\sigma(k,\theta),   
 \eeq  
where 
\beq
\label{eq:11}
 \la_\sigma(\theta)=a_\sigma-b_\sigma \left ( \theta+\theta^{-1}\right ).
\eeq
 In addition
 \beqq
 \label{eq:12}
e_\sigma(k, \theta)= \theta^k (1+o(1)) \ \text{as} \ k \to \infty
\eeqq
uniformly with respect to $\theta \in\overline{\D}$ 

The functions $e_\sigma(k, \theta)$ admit the representation
\beq
\label{eq:13}
e_\sigma(k,\theta)= c_\sigma(k)\sum_{m\geq k} a_\si(k,m)  \theta^m, \ k=0,1,\ldots .
\eeq
Here 
\beqq
\label{eq:14}
 c_\sigma(k)=\prod_{p=k}^\infty \frac {b_\sigma}{\gb_\sigma(p)}, \ a_\sigma(k,k)=1 \  \text{and}  \ \lim_{k\to \infty} \sum_{m\geq k+1} |a_\sigma(k,m)| =0
 \eeqq
For $k\geq 1$ the coefficients $\ga_\sigma(k)$, $\gb_\sigma(k)$ can be expressed through the coefficients of the functions $e_\sigma(k, \theta)$ 
\begin{gather*}
\label{eq:15}
\frac{\ga_\sigma(k)-a_\sigma}{b_\sigma}= a_\sigma(k-1,k)-a_\sigma(k,k+1); \\
\label{eq:16}
\frac{\gb_\sigma^2(k)}{b_\sigma^2} =  \frac{\ga_\sigma(k)-a_\sigma}{b_\sigma} a_\sigma(k,k+1)+a_\sigma(k,k+2)-a_\sigma(k-1,k+1)+ 1.
\end{gather*}
These relations can be obtained by substituting representation \eqref{eq:13} in \eqref{eq:10} and then comparing the coefficients 
with the same powers of $\theta$.\\ \\
In this article we assume that the coefficients $\ga_\sigma(k),\gb_\sigma(k)$ approach their limit values faster than \eqref{eq:09}. Namely we assume that, for some $\epsilon>0$,
\begin{equation*}
\sum_{k=1}^\infty (1+\epsilon)^k\{ |\ga_\sigma(k)-a_\sigma| + |\gb_\sigma(k)-b_\sigma|\} < \infty.
\end{equation*}
Under this condition the functions $e_\sigma(k,\theta)$ admit holomorphic prolongation to some vicinity of the unit disk $\mathbb{D}$, see e.g. chapter 10 in  \cite{Te}. This allows us to avoid extra technicalities which are necessary in the general case.
\subsection{Spectra, scattering data corresponding to the continuous spectra}

As the parameter $\theta$  runs through  the unit circle $\T$, the corresponding value $\la_\sigma(\theta)$ defined by  \eqref{eq:11} runs through the segment
\beqq
\label{eq:17}
I_\sigma= [a_\sigma-2b_\sigma, a_\sigma+2b_\sigma].
\eeqq
We assume for the simplicity that $\cup_{\sigma\in \cC}(a_\sigma-2b_\sigma, a_\sigma+2b_\sigma) $ is a connected set and choose $a,b\in \R$ so that 
\beqq
\label{eq:18}
I=\cup_{\sigma\in \cC} I_\sigma = [a-2b,a+2b]
\eeqq
We need the  Zhukovskii mappings  $W,\ W_\sigma : \D \to \C$:
\begin{gather}
\label{eq:19}
W : \omega \mapsto   a-b(\omega+\omega^{-1}),     \  W_\sigma : \theta\mapsto   a_\sigma-b_\sigma(\theta+\theta^{-1}), \\
\label{eq:20}
 \theta_\sigma(\omega)= W_\sigma^{-1} \circ W (\omega).
\end{gather}
Further we choose the set $J_\sigma \subset  [-1,1]$,   so that $\theta_\sigma$ maps $\D$ onto 
 $\D_\sigma:=\D\setminus J_\sigma $ and let 
 \beqq
 \label{eq:21}
 T_\sigma^+= \theta_\sigma^{-1} (\T), \   T_\sigma^-= \theta_\sigma^{-1}(J_\sigma)= \T\setminus  T_\sigma^+.
\eeqq
The set $J_\si$ consists of one or two segments attached to the points $\pm 1$. \\
The  functions $e(k,\theta_\sigma(\omega))$ are holomorphic in $\D$. Moreover, for $\omega\in T_\sigma^+$, the functions  $e(k,\theta_\sigma(\omega)^{-1})$
are well defined.\\ \\
By a special solution which correspond to $\sigma\in \cC$ we mean the function $\psi^\sigma(\alpha,\omega)$,  $\omega\in \D$, $\alpha\in \cA$
  with the following properties:\\ \\
\medskip
1. For each $\alpha\in \cA_0$ the function $\psi^\sigma(\alpha, \cdot)$ is holomorphic in a vicinity of  $\overline{\D}$,  except, perhaps  a finite set $\cO\subset \overline{\D}$ where it has poles.\\ \\
\medskip
2. For each $\omega\in \overline{\D}\setminus \cO$,  the function    $\psi^\sigma(\cdot, \omega)\in \cM(\cA)$ meets the equation 
 \beqq
 \label{eq:22}
 \cL \psi^\sigma ( \cdot,\omega)  = \la\psi^\sigma(\cdot, \omega),  \   \la=a+b(\omega+\omega^{-1})
 \eeqq
\medskip
3.  For $\gamma\neq \sigma$ there exist functions $s_{\sigma\gamma}(\omega)$, meromorhpic in a vicinity of  $\overline{\D}$  and a function $s_{\sigma\sigma}(\omega)$   continuous on $T^+_\sigma$ such that  
 the function $\psi^\sigma$ has the following representation along the channels:
 \beq
 \label{eq:23}
 \psi^\sigma (\gamma(k), \omega)=\begin{cases}
                                                               e_\sigma(k, \theta_\sigma(\omega)^{-1}) + s_{\sigma \sigma}(\omega)e_\sigma(k, \theta_\sigma(\omega)),  & \gamma=\sigma; \\
                                                                 s_{\gamma\sigma}(\omega)e_\gamma(k, \theta_\gamma(\omega)), & \gamma\neq \sigma
                                                        \end{cases}, \
                                                        \omega \in T^+_\sigma,
 \eeq
 and  for $\omega \in \D$
 \beq
 \label{eq:23a} 
 \psi^\sigma (\gamma(k), \omega)=\begin{cases}
    \theta_\sigma(\omega)^{-k} (1+o(1)), \ \mbox{as} \ k\to \infty, & \gamma=\sigma \\
            s_{ \gamma\sigma}(\omega)e_\gamma(k,\theta_\gamma(\omega)), &  \gamma\neq \sigma.
                                              \end{cases}, \ \omega \in \overline{\D}\setminus T^+_\sigma.
 \eeq
The {\em scattering coefficients}, $s_{\sigma\gamma}(\omega)$ are  the elements of the scattering matrix, corresponding
    to the problem \eqref{eq:01}. We mention  that so far they are defined on different arcs $T_\sigma^+\subset \T$. In 
      section \ref{sec:Construction} we will prove existence and uniqueness of such  special solutions for each $\sigma \in \cC$.\\
The solution $\psi^\si$ corresponds to the wave incoming along the channel $\si$ and distributing through the whole system.
This wave is well-defined  if $\omega\in T^+_\sigma$ (respectively $\lambda\in I_\sigma$), for these values the equation \eqref{eq:02} along the channel $\sigma$ admits two independent bounded solutions corresponding to  in- and out-coming  waves.  For 
$\omega \in T^+_\sigma\cap T^+_\gamma$ the wave incoming along $\sigma$ generates an outcoming wave along the 
channel $\gamma$, this wave can be observed at infinity. For $\omega \in T^+_\sigma\cap T^-_\gamma$ the wave
 incoming along $\sigma$ generates an exponentially decaying wave  $\gamma$,  observation of phase of such wave at
  infinity  is virtually impossible, one can measure the absolute values only. These reasoning  explain the definition  
  of the spectral data.

 \begin{De}
By continuous spectral data corresponding to the channel $\sigma \in \cC$ we mean the set of functions
\beqq
\cS_\sigma= \{ s_{\gamma \sigma}(\omega):\text{ }\omega\in T_\sigma^+\cap T_\gamma^+;\text{ }\gamma\in \cC\}\cup\{ |s_{ \gamma\sigma}(\omega)|:\text{ }\omega\in T_\sigma^+\cap T_\gamma^-; \text{ } \gamma\in \cC \}
\eeqq
By full continuous spectral data we mean
\beqq
\cS= \cup_{\sigma \in \cC}  \cS_\sigma.
\eeqq
 \end{De}
Later in sections \ref{SingU(k)} and \ref{sec8} we will discuss discrete spectrum, which corresponds to eigenfunctions of  the operator $\cL:l^2(\cA)\to l^2(\cA)$.

\section{Properties of solutions of finite difference equations}\label{sec4}

We need to establish some properties of solutions to \eqref{eq:01} as well as solutions to 
difference equations along the channels. In this section we collect   known   (see e.g. \cite{LM1,VA,Te} ) properties of solutions of
the finite-difference equation as well as some new statements related to solutions of \eqref{eq:01}
\beq
\label{eq:30}
-\gb(k-1)x(k-1)+\ga(k)x(k)-\gb(k)x(k+1)=\lambda x(k), \quad k=1, 2, \ldots \,.
\eeq
with real coefficients $\ga(k)$, $\gb(k)$. We also assume  that $\gb(k)>0$ in order that the Jost solutions will be well defined, see e.g. \cite{Te}.\\ \\ 
Given two functions $x=x(k)$, $y=y(k)$ on the  integers we define their
\textit{Wronskian}   $\{x,y\}$ as
\beqq
\label{eq:31}
\{x,y\}(k)= x(k){y(k+1)}- x(k+1){y(k)}, \quad k=0,1,\ldots \,.
\eeqq

\medskip

{\bf 1.} Let $x$, $y$  be  solutions to (\ref{eq:30}). Then, for all $N$,
\beq
\label{eq:32}
\gb(N)\{x,y\}(N)-\gb(0)\{x,y\}(0)=0
 \eeq
and
\beq
\label{eq:33}
\gb(N)\{x,\ov{x}\}(N)-\gb(0)\{x,\ov{x}\}(0)=
       (\lambda - \bar{\lambda})\sum_{k=1}^N |x(k)|^2.
\eeq
If in addition $\lambda=\lambda(\om)$ and $x=x(k,\om)$ are differentiable functions of a parameter $\om$, then
\beq
\label{eq:34}
\gb(N)\{\dot{x},\ov{x}\}(N)-\gb(0)\{\dot{x},\ov{x}\}(0)=
      \dot{\lambda}\sum_{k=1}^N|x(k)|^2 +
            (\lambda-\bar{\lambda})\sum_{k=1}^N\dot{x}(k)\overline{x(k)} \, ,
\eeq
here  and in what follows the dot denotes  derivative with respect to $\om$.\\  \\
If in addition 
\beqq
 \label{eq:35}
 \sum_{k=1}^\infty k( |\gb(k)-b|+|\ga(k)-a|) < \infty,
 \eeqq
and $e(\theta)=e(k,\theta)$ are the corresponding Jost solutions, relation \eqref{eq:33}  yields
\beq
\label{eq:36}
\gb(0)\{e(\theta), \ov{e}(\theta) \}(0) =\begin{cases} 
                                                                   b(\ov{\theta}-\theta), & |\theta|=1; \\
                                                                  b (\ov{\te}-\te) (|\te|^{-2}-1)\sum_{k=1}^\infty|e(k,\te)|^2, &  |\te|<1.                                                                   
                                                           \end{cases}        
\eeq

\begin{Lemma}
\label{le:02a}
Let $\om \in \T\cup(-1,1)$ and $\si\in \cC$ be such that $\te_\si(\om)\in (-1,1)$. Then 
\beq 
\label{eq:38a07}
-\gb_\si(0)\{\dot e_\si,(\te_\si(\om)) e_\si(\te_\si(\om))\}_\si(0)= \dot \la\sum_{k=1}^\infty e_\si(k,\te_\si(\om))^2.
\eeq
\end{Lemma}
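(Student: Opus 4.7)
The plan is to reduce the identity to the general Wronskian formula \eqref{eq:34} applied to $x(k)=e_\si(k,\te_\si(\om))$, viewed as a function of the parameter $\om$, and then pass to the limit $N\to\infty$. The first step is to observe that under the hypothesis $\te:=\te_\si(\om)\in(-1,1)$ the Jost solution is real-valued: the series representation \eqref{eq:13} has real coefficients $c_\si(k)$, $a_\si(k,m)$ (since the recursion coefficients $\ga_\si(k),\gb_\si(k)$ and $a_\si,b_\si$ are real), and $\te$ is real, so $\ov{e_\si(k,\te)}=e_\si(k,\te)$. Likewise, since $\om\in\T\cup(-1,1)$ gives $\om+\om^{-1}\in\R$, the eigenvalue $\la=\la_\si(\te)$ is real, so $\la-\ov\la=0$.

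Applying \eqref{eq:34} with $x(k)=e_\si(k,\te_\si(\om))$ and using the two observations above, the last sum in \eqref{eq:34} drops out and one obtains, for every $N$,
\beq
\gb_\si(N)\{\dot x, x\}(N) - \gb_\si(0)\{\dot x, x\}(0) = \dot\la \sum_{k=1}^N e_\si(k,\te_\si(\om))^2.
\eeq
The next task is to send $N\to\infty$. The sum on the right converges by the asymptotics $e_\si(k,\te)=\te^k(1+o(1))$ with $|\te|<1$, so the right-hand side tends to $\dot\la\sum_{k=1}^\infty e_\si(k,\te_\si(\om))^2$. For the boundary term at infinity, I need $\gb_\si(N)\{\dot x,x\}(N)\to 0$. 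Since $\gb_\si(N)\to b_\si$ is bounded, it suffices to show that both $x(k)$ and $\dot x(k)$ decay exponentially as $k\to\infty$.

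The exponential decay of $x(k)$ is immediate from $e_\si(k,\te)\sim\te^k$. For $\dot x(k)=\te_\si'(\om)\,\partial_\te e_\si(k,\te)|_{\te=\te_\si(\om)}$, the main obstacle is to control the $\te$-derivative of the Jost solution. Here I would invoke the stronger hypothesis made right after \eqref{eq:13}, namely that the coefficients decay geometrically, so that $e_\si(k,\cdot)$ extends holomorphically to a neighbourhood of $\overline{\D}$. Cauchy's estimate on a small circle around $\te\in(-1,1)$ then gives $|\partial_\te e_\si(k,\te)|=O(r^k)$ with some $r<1$, so that $\dot x(k)=O(r^k)$ and hence $\{\dot x,x\}(N)\to 0$ exponentially. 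Passing to the limit in the displayed identity yields
\beq
-\gb_\si(0)\{\dot e_\si(\te_\si(\om)), e_\si(\te_\si(\om))\}_\si(0)=\dot\la\sum_{k=1}^\infty e_\si(k,\te_\si(\om))^2,
\eeq
which is the claimed formula. The only nontrivial step in this scheme is the decay estimate for $\partial_\te e_\si(k,\te)$ that kills the boundary term at infinity; everything else is a direct application of \eqref{eq:34} together with the reality of $x$ and $\la$.
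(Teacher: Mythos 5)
Your argument is correct and is exactly the route the paper takes: the paper dismisses the lemma as ``an immediate consequence of \eqref{eq:34},'' and your write-up simply supplies the omitted details --- the reality of $e_\si(k,\te)$ and of $\la$ when $\te_\si(\om)\in(-1,1)$, which kills the second sum in \eqref{eq:34}, and the vanishing of the boundary term at $N\to\infty$ via the exponential decay of the Jost solution and of its $\te$-derivative (justified by the geometric-decay hypothesis and a Cauchy estimate). Nothing further is needed.
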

This is an immediate consequence of \eqref{eq:34}.\\ 

\medskip

{\bf 2.}  We will consider Wronskians, which correspond to various channels $\sigma\in\mathcal{C}$. Given two functions $\xi,\eta\in \cM(\cA)$  and $\si\in \cC$ we denote
\beqq
\label{eq:37}
\{\xi,\eta\}_\si (k):=
                                 \{\xi(\si(k))\eta(\si(k+1))-\xi(\si(k+1))\eta(\si(k))\}
\eeqq

\smallskip

\begin{Rem}\label{Rem:01}
If both $\xi$ and $\eta$ are solutions of \eqref{eq:01} it follows from \eqref{eq:32} that the quantity 
$\gb_\si(k)\{\xi,\eta\}_\si (k)$ depends on $\si$ only. 
\end{Rem}

\smallskip

\begin{Lemma}
\label{le:01}
Let $\xi,\eta\in \cM(\cA)$ be solutions to \eqref{eq:01}. Then 
\beq
\label{eq:38}
\sum_{\si\in \cC} \gb_\si(0)\{\xi,\eta\}_\si (0) = 0
 \eeq
\end{Lemma}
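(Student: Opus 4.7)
The plan is to derive the identity from a discrete Green's formula on the central part $\cA_1$, using the fact that $\cL_1$ is a symmetric finite matrix. The quantity $\gb_\sigma(0)\{\xi,\eta\}_\sigma(0)$ mixes the value of $\xi,\eta$ at $\sigma(0)\in\Gamma\subset\cA_1$ with the value at $\sigma(1)\in\cA_0$, so it encodes exactly the ``flux'' from the central part into the channel $\sigma$. Summing these fluxes over all channels ought to reproduce the boundary term in Green's identity for $\cL_1$, which vanishes because $\cL_1$ is symmetric and $\xi,\eta$ satisfy the same equation $\cL\xi=\la\xi$, $\cL\eta=\la\eta$ with the same $\la$.

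First I would write the eigenvalue equation $\cL\xi=\la\xi$ at any point $\alpha\in\cA_1$, splitting the matrix action $\cL$ into the part indexed by $\cA_1$ (which is $\cL_1$) and the part reaching into $\cA_0$. As noted in Section~\ref{sec2}, the only off-diagonal entries linking $\cA_1$ to $\cA_0$ are $L(\sigma(0),\sigma(1))=-\gb_\sigma(0)$, so this yields
\beqq
\bigl(\cL_1\xi-\la\xi\bigr)(\alpha)=
\begin{cases}
\gb_\nu(0)\,\xi(\nu(1)), & \alpha=\nu(0)\in\Gamma,\\
0, & \alpha\in\cA_1\setminus\Gamma,
\end{cases}
\eeqq
and the analogous identity for $\eta$. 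Call the right-hand sides $F_\xi(\alpha)$ and $F_\eta(\alpha)$; these are functions on the finite set $\cA_1$ supported in $\Gamma$.

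Next I would apply the bilinear Green identity for $\cL_1$. Since $L(\alpha,\beta)$ is real and symmetric and $\cA_1$ is finite, the bilinear form (no complex conjugation) satisfies $\sum_{\alpha\in\cA_1}(\cL_1\xi)(\alpha)\eta(\alpha)=\sum_{\alpha\in\cA_1}\xi(\alpha)(\cL_1\eta)(\alpha)$. Subtracting the $\la$-terms, which cancel identically, we obtain
\beqq
\sum_{\alpha\in\cA_1} F_\xi(\alpha)\,\eta(\alpha) \;=\; \sum_{\alpha\in\cA_1}\xi(\alpha)\,F_\eta(\alpha).
\eeqq
Both sides are supported on $\Gamma$, so the identity reads
\beqq
\sum_{\sigma\in\cC}\gb_\sigma(0)\,\xi(\sigma(1))\,\eta(\sigma(0))
\;=\;\sum_{\sigma\in\cC}\gb_\sigma(0)\,\xi(\sigma(0))\,\eta(\sigma(1)),
\eeqq
which, after rearranging, is exactly $\sum_{\sigma\in\cC}\gb_\sigma(0)\{\xi,\eta\}_\sigma(0)=0$.

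The computation itself is routine; the only thing that requires care is the bookkeeping at the interface $\Gamma$, i.e.\ correctly identifying which entries of $\cL$ connect $\cA_1$ to $\cA_0$ and recognising that the resulting boundary terms assemble into channel Wronskians at $k=0$. Note also that the argument does not require $\la\notin\{\la_j\}_{j=1}^M$: we never invert $\cL_1-\la I$, only use its symmetry, so the lemma holds for every $\la\in\C$ at which solutions $\xi,\eta$ of \eqref{eq:01} exist.
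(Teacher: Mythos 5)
Your proof is correct and is essentially the paper's own argument: both multiply the eigenvalue equation on $\cA_1$ by $\eta$ and $\xi$ respectively, use the symmetry of $L(\alpha,\beta)$ on $\cA_1\times\cA_1$ to cancel the interior terms, and identify the surviving boundary terms $L(\sigma(0),\sigma(1))=-\gb_\sigma(0)$ as assembling into the channel Wronskians at $k=0$. Your packaging via $\cL_1$ and the flux functions $F_\xi,F_\eta$ is just a notational variant of the same computation.
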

        \begin{proof}
For $\alpha\in \cA_1$ we have 
\begin{gather*}
\sum_{\beta\in \cA_1}L(\alpha,\beta)\xi(\beta) \ + \  \sum_{\beta\in \cA_0}L(\alpha,\beta)\xi(\beta) = \la\xi(\alpha), \\
\sum_{\beta\in \cA_1}L(\alpha,\beta)\eta(\beta) \ + \  \sum_{\beta\in \cA_0}L(\alpha,\beta)\eta(\beta) = \la\eta(\alpha),
\end{gather*}
We multiply these relations by $\eta(\alpha)$ and $\xi(\alpha)$ respectively. Summation with respect to $\alpha\in\cA_1$
gives
\begin{gather*}
\sum_{\alpha, \beta\in \cA_1}L(\alpha,\beta)\xi(\beta)\eta(\alpha) \ + \  \sum_{\alpha\in \cA_1, \beta\in \cA_0}L(\alpha,\beta)\xi(\beta)\eta(\alpha) 
= \la \sum_{\alpha\in\cA_1}\xi(\alpha)\eta(\alpha), \\
\sum_{\alpha, \beta\in \cA_1}L(\alpha,\beta)\eta(\beta)\xi(\alpha) \ + \  \sum_{\alpha\in \cA_1,\beta\in \cA_0}L(\alpha,\beta)\xi(\alpha)\eta(\beta) 
 = \la \sum_{\alpha\in\cA_1}\xi(\alpha)\eta(\alpha).
\end{gather*}
The right-hand sides in these relations coincide as well as the first terms in the left-hand sides (since $L(\alpha,\beta)=L(\beta,\alpha)$). Therefore
\beqq
 \sum_{\alpha\in \cA_1, \beta\in \cA_0}L(\alpha,\beta) \left ( \xi(\beta)\eta(\alpha) - \xi(\alpha)\eta(\beta) \right )=0.
\eeqq
This proves the lemma since the only option for $L(\alpha,\beta)\neq 0$ for $\alpha\in \cA_1$, $\beta\in \cA_0$ is $\alpha=\si(0)$, 
$\beta=\si(1)$ for some $\si\in \cC$ and in this case $L(\si(1),\si(0))=-\gb_{\si} (0)$.
       \end{proof}
       
   \medskip 
   
   {\bf 3.}  We need a special statement in order to calculate the energy of eigenfunctions of the operator $\cL$. 
   
   \begin{Lemma}
   \label{le:02}
   Let a function $\xi(\alpha)=\xi(\alpha,\om)\in \cM(\cA)$ be differentiable with
    respect to $\om$ in a neighborhood of $\hat{\om}\in \overline{\D}$, $\Im \la(\hat{\om})=0$ and $\xi(\al)$ satisfy the equation
    \beq
    \label{eq:38a01}  
     \la(\om)\xi(\al,\om)= \sum_{\be\in \cA}L(\al,\be)\xi(\be,\om).
     \eeq   
     Let also a function $\eta(\al)$ meet this equation for $\om =\hat{\om}$. 
     Then 
 \beqq
 \label{eq:38a02}
 \dot{\la}(\hat\om)\sum_{\al\in \cA_1}\xi(\al,\hat\om)\ov{\eta(\al)}=
 \sum_{\si\in \cC}\gb_\si(0)\{\dot{\xi}(\hat \om), \ov{\eta}\}_\si(0).
 \eeqq
 In particular 
 \beqq
 \label{eq:38a03}
 \dot{\la}(\hat \om)\sum_{\al\in \cA_1}|\xi(\al,(\hat\om))|^2=
 \sum_{\si\in \cC}\gb_\si(0)\{\dot{\xi}(\hat \om), \ov{\xi(\hat \om)}\}_\si(0).
 \eeqq
 \end{Lemma}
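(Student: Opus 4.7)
The plan is to mimic the proof of Lemma \ref{le:01} almost verbatim, with $\xi$ replaced by its $\omega$-derivative $\dot\xi(\hat\omega)$ and $\eta$ replaced by $\overline\eta$. The extra $\dot\lambda$ term on the left-hand side arises naturally when one differentiates the eigenequation \eqref{eq:38a01} in $\omega$; everything else is a bookkeeping exercise that exploits the web-like geometry exactly as before.

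First I would differentiate \eqref{eq:38a01} in $\omega$ and set $\omega=\hat\omega$, obtaining
\beqq
\dot\lambda(\hat\omega)\,\xi(\alpha,\hat\omega)+\lambda(\hat\omega)\,\dot\xi(\alpha,\hat\omega)=\sum_{\beta\in\cA}L(\alpha,\beta)\,\dot\xi(\beta,\hat\omega),\qquad\alpha\in\cA.
\eeqq
Since $L(\alpha,\beta)\in\R$ and $\lambda(\hat\omega)\in\R$ (the assumption $\Im\lambda(\hat\omega)=0$), the eigenequation for $\eta$ at $\omega=\hat\omega$ conjugates to
\beqq
\lambda(\hat\omega)\,\overline{\eta(\alpha)}=\sum_{\beta\in\cA}L(\alpha,\beta)\,\overline{\eta(\beta)}.
\eeqq

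Next, I multiply the first identity by $\overline{\eta(\alpha)}$, the second by $\dot\xi(\alpha,\hat\omega)$, and sum each over $\alpha\in\cA_1$ (a finite set). Subtracting, the $\lambda(\hat\omega)\sum_{\cA_1}\dot\xi\,\overline\eta$ terms cancel. Splitting the inner $\beta$-sums according to the decomposition $\cA=\cA_0\cup\cA_1$, the $\cA_1\times\cA_1$ contributions cancel by the symmetry $L(\alpha,\beta)=L(\beta,\alpha)$, exactly as in the proof of Lemma \ref{le:01}. What survives is
\beqq
\dot\lambda(\hat\omega)\sum_{\alpha\in\cA_1}\xi(\alpha,\hat\omega)\overline{\eta(\alpha)}=\sum_{\alpha\in\cA_1,\,\beta\in\cA_0}L(\alpha,\beta)\bigl(\overline{\eta(\alpha)}\,\dot\xi(\beta,\hat\omega)-\dot\xi(\alpha,\hat\omega)\,\overline{\eta(\beta)}\bigr).
\eeqq

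Finally I invoke the geometry: for $\alpha\in\cA_1$, $\beta\in\cA_0$ one has $L(\alpha,\beta)\neq 0$ only when $\alpha=\sigma(0)$, $\beta=\sigma(1)$ for some $\sigma\in\cC$, and then $L(\sigma(0),\sigma(1))=-\gb_\sigma(0)$ by \eqref{sec1:1.2}. Writing out
\beqq
\{\dot\xi,\overline\eta\}_\sigma(0)=\dot\xi(\sigma(0))\,\overline{\eta(\sigma(1))}-\dot\xi(\sigma(1))\,\overline{\eta(\sigma(0))},
\eeqq
the remaining boundary sum is precisely $\sum_{\sigma\in\cC}\gb_\sigma(0)\{\dot\xi(\hat\omega),\overline\eta\}_\sigma(0)$, which is the claimed identity. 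The second formula is obtained by specialising $\eta=\xi(\cdot,\hat\omega)$, which is a solution at $\omega=\hat\omega$ by assumption. There is no real obstacle beyond careful sign-tracking in the Wronskian convention and keeping the roles of the differentiated and undifferentiated factors distinct; no summability issue arises because $\cA_1$ is finite.
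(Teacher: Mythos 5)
Your proof is correct and follows essentially the same route as the paper's: differentiate the eigenequation in $\omega$, conjugate the equation for $\eta$ using the reality of $L(\alpha,\beta)$ and of $\lambda(\hat\omega)$, combine and sum over the finite set $\cA_1$, kill the $\cA_1\times\cA_1$ block by the symmetry of $L$, and identify the surviving $\cA_1\times\cA_0$ terms with the boundary Wronskians via $L(\sigma(0),\sigma(1))=-\gb_\sigma(0)$. The sign bookkeeping in the Wronskian and the specialization $\eta=\xi(\cdot,\hat\omega)$ for the second formula are both handled correctly.
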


\begin{proof}
The statement uses the same idea as in lemma 2.1 in \cite{LM}, we repeat here the construction:\\
Differentiate \eqref{eq:38a01} with respect to $\om$:
\beqq
    \label{eq:38a03}  
     \dot{\la}(\om)\xi(\al,\om)+ {\la}(\om)\dot{\xi}(\al,\om)= \sum_{\be\in \cA}L(\al,\be)\dot{\xi}(\be,\om).
     \eeqq   
Besides 
\beqq
 \label{eq:38a04}
 \la(\hat{\om})\ov{\eta(\al)}= \ov{ \la(\hat{\om})\eta(\al)}=\sum_{\beta\in \cA}
    L(\al,\be) \ov{\eta(\be)}.
 \eeqq   
Combining these equations we obtain
 \beqq
 \label{eq:38a05}
  \dot{\la}(\hat{\om}) {\xi}(\al,\om)\ov{\eta(\al)} = \sum_{\beta\in \cA}
    L(\al,\be) [ \dot{\xi}(\beta, \hat{\om})  \ov{\eta(\al) } - \dot{\xi}(\al, \hat{\om})  \ov{\eta(\be)}],
\eeqq
and
 \begin{multline*}
 \label{eq:38a06}
  \dot{\la}(\hat{\om})\sum_{\alpha\in\mathcal{A}_1} {\xi}(\al,\om)\ov{\eta(\al)}= \sum_{\al\in\cA_1}\sum_{\be\in \cA_1}
  L(\al,\be) [ \dot{\xi}(\beta, \hat{\om})  \ov{\eta(\al) } - \dot{\xi}(\al, \hat{\om})  \ov{\eta(\be)}] + \\
  \sum_{\al\in\cA_1}\sum_{\be\in \cA_0}
  L(\al,\be) [ \dot{\xi}(\beta, \hat{\om})  \ov{\eta(\al) } - \dot{\xi}(\al, \hat{\om})  \ov{\eta(\be)}].
  \end{multline*}
The first summand in the right-hand side of this relation vanishes because it is anti-symmetric in $\al$ and $\be$.
  In the second summand the only non-zero coefficient $L(\al,\be)$  appears in the case $\al=\si(0)$, $\be=\si(1)$ for some $\si\in \cC$ and $L(\si(0),\si(1))=-\gb_\si(0)$.
       \end{proof}

%
%
%
\section{Construction of the special solutions}\label{sec:Construction}

Consider the diagonal matrices 
\beqq
\label{eq:50}
B:=\di\{b_\si\}_{\si\in \cC}, \ B(0):=\di\{\gb_\si(0)\}_{\si\in \cC}, 
\eeqq
and also the matrix-functions in $\bar{\D}$
\beqq
\label{eq:51}
\cE(k,\om):=\di \{e(k,\te_\si(\om))\}_{\si\in \cC}, \   P(k,\om):= \di \{p_\si(k,\om)\}_{\si\in \cC},
\eeqq
here $\{p_\si(k,\om)\}_{k=0}^\infty$  satisfy the equation on the channels:
\begin{equation*}
-\gb_\sigma(k-1)p_\si(k-1,\om)+\ga_\sigma(k)p_\si(k,\om)-\gb_\sigma(k)p_\si(k+1,\om)=\lambda(\omega)p_\si(k,\om)
\end{equation*}
for $k=1,2,\dots$, $\sigma\in\mathcal{C}$ and  with  boundary conditions  
\beq
\label{eq:52}
 p_\si(0,\om)=1, \ p_\si(1,\om)=0,\quad \sigma\in\mathcal{C}
 \eeq
The functions $\cE(k,\om)$ and $ P(k,\om)$ are   holomorphic in a neighborhood of $\overline{\D}$, except, perhaps zero, where  $ P(k,\om)$ may have poles. \\
Moreover, it is well known, see \cite{MS,Te}, that the Jost solutions form a fundamental system of solutions of the finite-difference equation \eqref{eq:30}.  If $|\theta|=1$ and $|\theta|\neq 1$, it follows from \eqref{eq:36} that $e(\theta)$, $\bar{e}(\theta)=e(\theta^{-1})$ are independent solutions of \eqref{eq:30} and any other solution $\{x(k,\theta)\}_{k\geq 0}$ can be expressed as $x(k,\theta)=m(\theta)e(k,\theta)+n(\theta)e(k,\theta^{-1})$, with $m(\theta),n(\theta)$ independent of $k$. Thus, for  $\om \in T_\si^+$ the function $p_\si(k,\om)$ may be expressed in terms of the corresponding Jost solutions:
\beq
\label{eq:53}
p_\si(k,\om)= \frac{\gb_\si(0)}{b_\si}
\frac{e_\si(k,\te_\si(\om))e_\si(1,\te_\si(\om)^{-1}) - e_\si(k,\te_\si(\om)^{-1})e_\si(1,\te_\si(\om))}{\te_\si(\om)^{-1}-\te_\si(\om)}.
\eeq
\\ \\
In order to construct the special solutions we need auxiliary operator-function $T(\om): l^2(\cC) \to l^2(\cC)$ defined as
\beq
\label{eq:54}
T(\om)= \cE(0,\om) - \cR(\la(\om)) B(0) \cE(1, \om),  \ \om \in \D.
\eeq
This function is holomorphic in a vicinity of $\overline{\mathbb{D}}$, except the set $\cO$ of poles of the function $\cR(\lambda(\omega))$. 
\medskip

Denote 
\beqq
\label{eq:55}
\Delta_\si(\om) =\begin{cases} 
                                                                   b_\si , & |\theta_\si(\om)|=1; \\
                                                                  b_\si   (|\te_\si(\om)|^{-2}-1)\sum_{k=1}^\infty|e_\si(k,\te_\si(\om))|^2, &  |\te_\si(\om)|<1,                                                                  
                           \end{cases}
\eeqq
and                            
\beq
\label{eq:56}                                                           
\Delta(\om)=\di\{\Delta_\si(\om)\}_{\si \in \cC}, \quad \Phi(\om)=\di \{\bar{\theta}_\si(\om)-\theta_\si(\om)\}_{\si\in\cC}.
\eeq   
The operator  $\Delta(\om)$ is   positive uniformly with respect $\om \in \overline{ \bD}$, i.e.,
for some $C>0$,
\beqq
\label{eq:57}
\lng \Delta(\om){\mbf x, \mbf x} \rng \geq  C \|{\mbf x }\|^2,
                  \quad {\mbf  x}\in l^2(\cC), \, \om \in \overline{\bD}.
\eeqq
 Besides relation  \eqref{eq:36} now reads
 \beq
\label{eq:58}
B(0)\left \{ \cE(0,\om)\cE(1,\om)^*-\cE(0,\om)^*\cE(1,\om) \right \} =
  \Phi(\om)\Delta(\om),
\eeq                                                      

\begin{Lemma}
\label{L:4.1a}
(See lemma 3.1 in \cite{LM})The following inequality holds for all $\theta\in \bD \setminus \cO$, ${\mbf  x}=(x_\si)_{\si\in \cC}\in l^2(\cC)$
\beq
\label{eq:59}
|\lng \cE(1,\om)^*B(0)T(\om){\mbf x}, {\mbf x}\rng |\geq
     |\Im \lng \cE(1,\om)^*B(0)T(\om){\mbf x}, {\mbf x}\rng |  \geq 
         C\sum_{\si\in \cC} |\bar{\theta}_\si(\om)-\theta_\si(\om)||x_\si|^2.
\eeq
\end{Lemma}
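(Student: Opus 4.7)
The first inequality $|z|\ge|\Im z|$ is immediate. For the second I set $F(\om):=\cE(1,\om)^*B(0)T(\om)$ and work with
\begin{equation*}
2i\,\Im\langle F(\om){\mbf x},{\mbf x}\rangle=\langle(F(\om)-F(\om)^*){\mbf x},{\mbf x}\rangle,
\end{equation*}
so the problem reduces to computing the skew-Hermitian part of $F$ and bounding its quadratic form from below in absolute value.

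To carry out the computation I would plug in $T(\om)=\cE(0,\om)-\cR(\la)B(0)\cE(1,\om)$, use that $B(0)$ and each $\cE(k,\om)$ are diagonal (hence commute, and $\cE(k,\om)^*B(0)=B(0)\cE(k,\om)^*$), and note that the real symmetry of the matrix $\cR$ gives $\cR(\la)^*=\cR(\bar\la)$. A short manipulation then yields
\begin{equation*}
F(\om)-F(\om)^*=B(0)\bigl[\cE(1,\om)^*\cE(0,\om)-\cE(0,\om)^*\cE(1,\om)\bigr]-\cE(1,\om)^*B(0)\bigl[\cR(\la)-\cR(\bar\la)\bigr]B(0)\cE(1,\om).
\end{equation*}
The first bracket equals $\Phi(\om)\Delta(\om)$ by \eqref{eq:58}; for the second, the resolvent identity gives $\cR(\la)-\cR(\bar\la)=(\la-\bar\la)\cR(\la)\cR(\bar\la)=(\la-\bar\la)\cR(\bar\la)^*\cR(\bar\la)$. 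Dividing by $2i$ I arrive at
\begin{equation*}
\Im\langle F(\om){\mbf x},{\mbf x}\rangle=-\sum_{\si\in\cC}\Im\theta_\si(\om)\,\Delta_\si(\om)|x_\si|^2-\Im\la(\om)\,\bigl\|\cR(\bar\la)B(0)\cE(1,\om){\mbf x}\bigr\|^2.
\end{equation*}

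The decisive step is the sign alignment of the two summands. Since $\theta_\si=W_\si^{-1}\circ W$ is a composition of two Zhukovskii maps with real coefficients, it sends the upper half of $\bD$ to the upper half of $\bD_\si$, so $\Im\theta_\si(\om)$ has the same sign as $\Im\om$. The elementary calculation $\Im(\om+\om^{-1})=(r-r^{-1})\sin t$ for $\om=re^{it}\in\D$ (negative when $r<1$ and $\sin t>0$), combined with $\la=a-b(\om+\om^{-1})$, shows that $\Im\la(\om)$ also shares the sign of $\Im\om$. Hence both terms on the right-hand side above carry the sign opposite to $\Im\om$, and combined with the uniform positivity $\Delta_\si\ge C_0>0$ this yields
\begin{equation*}
|\Im\langle F(\om){\mbf x},{\mbf x}\rangle|\ge\sum_{\si\in\cC}|\Im\theta_\si(\om)|\Delta_\si(\om)|x_\si|^2\ge\tfrac{C_0}{2}\sum_{\si\in\cC}|\bar\theta_\si(\om)-\theta_\si(\om)||x_\si|^2,
\end{equation*}
which is the claim after relabelling the constant.

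The main obstacle is precisely this sign analysis and its extension to the boundary $\om\in\T$. On the circle $\la$ becomes real, so the resolvent summand drops out automatically; for indices $\si$ with $\om\in T_\si^-$ the value $\theta_\si(\om)$ lies in $J_\si\subset(-1,1)$ and is real, so both $\Im\theta_\si(\om)$ and $|\bar\theta_\si(\om)-\theta_\si(\om)|$ vanish simultaneously and the inequality remains consistent. I would confirm these boundary facts by taking limits from the interior of $\D$, using continuity of $\theta_\si$ up to $\bar\D$ guaranteed by the exponential decay hypothesis on the coefficients.
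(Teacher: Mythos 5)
Your proof is correct and follows essentially the same route as the paper: both compute the skew-Hermitian part of $\cE(1,\om)^*B(0)T(\om)$, identify the Wronskian term $\Phi(\om)\Delta(\om)$ via \eqref{eq:58} and a nonnegative resolvent term, and conclude by the sign alignment of $\Im\theta_\si(\om)$ and $\Im\la(\om)$ with $\Im\om$. The only cosmetic difference is that you obtain the nonnegativity of the resolvent term from the abstract identity $\cR(\la)-\cR(\bar\la)=(\la-\bar\la)\cR(\bar\la)^*\cR(\bar\la)$, whereas the paper writes out the eigenfunction expansion of $\cR$ explicitly; you also make explicit the sign analysis that the paper leaves implicit.
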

\begin{proof}
We have
\beqq
\label{eq:60}
2\Im   \lng \cE(1,\om)^*B(0)T(\om){\mbf x}, {\mbf x} \rng =
    \lng [ \cE(1,\om)^*B(0)T(\om)- T(\om)^*B(0)\cE(1,\om)]{\mbf x}, {\mbf x} \rng
\eeqq
and, by (\ref{eq:54}),
\begin{multline*}
\label{eq:61}
\cE(1,\omega)^*B(0)T(\om)- T(\om)^*B(0)\cE(1,\om)=
        B(0)(\cE(0,\om)\cE(1, \om)^*-\cE(0, \om)^*\cE(1,\om))+ \\
                 (B(0)\cE(1,\om))^*(\cR(\lambda(\om))^*-\cR(\lambda(\om)))
                         (B(0)\cE(1,\om))
\end{multline*}
The lemma now follows from (\ref{eq:56}) - (\ref{eq:58}) and from the fact that
 \begin{equation*}
 \label{eq:62} 
  \cR(\lambda(\om))^*-\cR(\lambda(\om))=
              b (\bar{\om}- \om)\underbrace {(|\om|^{-2}-1)               \left (
        \sum_{l=1}^M \frac{p_l(\sigma(0))p_l(\nu(0))}{|\lambda_l - \lambda(\om)|^2}
                 \right )_{\sigma, \nu \in \cC}}_{\Delta_1(\om)},
\end{equation*}
here  $\Delta_1(\theta)$ is a non-negative operator.
\end{proof}

 \medskip
\begin{Cor}
Operators $T(\om)$ are invertible for all non-real $\om $ in the open disk ${\bD}\setminus \{\cO\cup \{\pm1\}\}$.
\end{Cor}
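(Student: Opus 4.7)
The plan is to use Lemma \ref{L:4.1a} directly: the quadratic-form inequality stated there forces $T(\omega)$ to be injective, and since $l^2(\cC)$ is finite-dimensional (the number of channels is assumed finite), injectivity is equivalent to invertibility.

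The only nontrivial point is that the right-hand side of \eqref{eq:59} is strictly positive for every nonzero $\mbf{x}$. This reduces to showing that $\theta_\sigma(\omega)$ is non-real for every $\sigma\in\cC$ whenever $\omega$ is a non-real point of $\bD\setminus\cO$. To see this, I would first compute that $\lambda(\omega)=W(\omega)=a-b(\omega+\omega^{-1})$ has nonzero imaginary part for non-real $\omega\in\bD$: writing $\omega=re^{i\varphi}$ with $r<1$ and $\sin\varphi\neq 0$, one has $\Im W(\omega)=b\sin\varphi\,(r^{-1}-r)\neq 0$. Consequently $\lambda(\omega)\notin\R$. Then, because all coefficients of $W_\sigma$ are real, $W_\sigma$ sends real points to real points, so $\theta_\sigma(\omega)=W_\sigma^{-1}(W(\omega))$ cannot be real; equivalently, $\bar\theta_\sigma(\omega)-\theta_\sigma(\omega)\neq 0$ for every $\sigma\in\cC$.

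Given this, I would argue as follows. Suppose $T(\omega)\mbf{x}=0$ for some $\mbf{x}\in l^2(\cC)$. Then
\[
\bigl\langle \cE(1,\omega)^*B(0)T(\omega)\mbf{x},\mbf{x}\bigr\rangle=0,
\]
so Lemma \ref{L:4.1a} yields
\[
0\;\ge\;C\sum_{\sigma\in\cC}|\bar\theta_\sigma(\omega)-\theta_\sigma(\omega)|\,|x_\sigma|^2.
\]
Since $\cC$ is finite and each factor $|\bar\theta_\sigma(\omega)-\theta_\sigma(\omega)|$ is strictly positive by the previous paragraph, one has $\sum_\sigma|\bar\theta_\sigma(\omega)-\theta_\sigma(\omega)||x_\sigma|^2\ge \delta(\omega)\|\mbf{x}\|^2$ for some $\delta(\omega)>0$. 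Hence $\mbf{x}=0$, proving that $T(\omega)$ is injective. Finite-dimensionality of $l^2(\cC)$ then gives invertibility.

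There is no real obstacle here beyond observing the non-reality of the $\theta_\sigma(\omega)$, which is an elementary property of the Zhukovskii maps. The substance of the corollary is already packaged in Lemma \ref{L:4.1a}; it is worth remarking, however, why the points $\pm 1$ must be excluded: these are precisely the critical values of $W$ in $\overline{\bD}$ where $\omega=\omega^{-1}$ can happen and where the above imaginary-part computation degenerates, so one loses the guarantee that $\theta_\sigma(\omega)$ stays off the real axis.
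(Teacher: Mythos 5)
Your proposal is correct and follows essentially the same route as the paper: the paper also deduces invertibility directly from Lemma \ref{L:4.1a} by noting that $\delta(\omega)=\inf_\sigma|\Im\theta_\sigma(\omega)|>0$ for non-real $\omega\in\D$, which via the Cauchy--Schwarz inequality gives $\|T(\omega)\mathbf{x}\|\geq C\delta(\omega)\|\mathbf{x}\|$ and hence injectivity in the finite-dimensional space $l^2(\cC)$. Your explicit verification that $\Im W(\omega)\neq 0$ for non-real $\omega\in\D$, and hence that each $\theta_\sigma(\omega)$ is non-real, is a detail the paper leaves implicit but is the same underlying argument.
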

\begin{proof}
Indeed, fix an $\omega\in \D\setminus [-1,1]$ and denote $\delta(\omega)=\inf_\sigma \{|\Im \theta_\sigma(\omega)| \}>0$.
Relation \eqref{eq:59} now reads 
\beqq
\label{eq:62a}
\|T(\omega)\mbf x\|>C\delta(\omega)\|\mbf x\|,
\eeqq
which yields  invertibility of $T(\omega)$. 
\end{proof}
Since $T(\omega)$ is an analytic function, we can now claim that $T(\omega)$ is invertible in a vicinity of $\overline{\mathbb{D}}$, except perhaps at a finite set of points.\\ \\ 
Consider the analytic matrix functions :
\beqq
\label{eq:63}
D(\om):= \di\{\te_\si(\om)^{-1}-\te_\si(\om)\}
\eeqq
and 
\beq
\label{eq:64}
U(k,\om)=\left [ -P(k,\om)+\cE(k,\om) T(\om)^{-1} \right ]B B(0)^{-1}D(\om)\cE (1,\om)^{-1}.
\eeq
\begin{Lemma}
Consider the vector function on $\cA_0\cup \Gamma$
 \beq
\label{eq:65}
\psi^\si(k,\omega)= (\psi^\si(\gamma(k),\om))_{\gamma\in \cC}:= U(k,\om)  {\mbf  n}_\si.
\eeq
This function satisfies the boundary condition \eqref{eq:08b} and, for $\om\in T^+_\si$, admits representation  \eqref{eq:23}
along the channels. Thus by \eqref{eq:07}  it may be prolongated to a special solution of \eqref{eq:01}.
\end{Lemma}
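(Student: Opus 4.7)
My plan is to verify three properties of the vector function $\psi^\si(k,\om)=U(k,\om)\mbf n_\si$: (i) each of its components is a solution of the corresponding channel equation \eqref{eq:02}; (ii) it satisfies the boundary condition \eqref{eq:08b}; and (iii) for $\om\in T^+_\si$ it has the Jost representation \eqref{eq:23}. Property (i) is automatic from the construction, since $P(k,\om)$ and $\cE(k,\om)$ are diagonal matrices whose $\gamma$-entries solve the $\gamma$-th channel equation, and $U(k,\om)$ is obtained from them by right-multiplication by matrices independent of $k$.

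For (ii), set $A(\om):=BB(0)^{-1}D(\om)\cE(1,\om)^{-1}$ and use the initial data $P(0,\om)=I$ and $P(1,\om)=0$ from \eqref{eq:52} to obtain $U(0,\om)=[\cE(0,\om)T(\om)^{-1}-I]A(\om)$ and $U(1,\om)=\cE(1,\om)T(\om)^{-1}A(\om)$. The definition \eqref{eq:54} of $T(\om)$ rearranges to $\cR(\la(\om))B(0)\cE(1,\om)=\cE(0,\om)-T(\om)$; left-multiplying $U(1,\om)$ by $\cR(\la(\om))B(0)$ and using this identity yields exactly $U(0,\om)$. This is the boundary condition \eqref{eq:08b} applied column by column to $U(k,\om)$, and in particular to $\psi^\si$.

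For (iii), the diagonality of $P(k,\om)$, $\cE(k,\om)$ and $A(\om)$ gives the componentwise expression
\beqq
\psi^\si(\gamma(k),\om)=-p_\gamma(k,\om)[A(\om)]_{\gamma\gamma}\,\delta_{\gamma\si}+e_\gamma(k,\te_\gamma(\om))[T(\om)^{-1}A(\om)]_{\gamma\si}.
\eeqq
For $\gamma\neq\si$ the first term vanishes and the remainder already has the prescribed form $s_{\gamma\si}(\om)e_\gamma(k,\te_\gamma(\om))$, with $s_{\gamma\si}(\om):=[T(\om)^{-1}A(\om)]_{\gamma\si}$. For $\gamma=\si$ and $\om\in T^+_\si$ I substitute the Wronskian formula \eqref{eq:53} for $p_\si(k,\om)$; a direct computation using $[A(\om)]_{\si\si}=b_\si(\te_\si(\om)^{-1}-\te_\si(\om))/(\gb_\si(0)e_\si(1,\te_\si(\om)))$ yields
\beqq
-p_\si(k,\om)[A(\om)]_{\si\si}=e_\si(k,\te_\si(\om)^{-1})-\frac{e_\si(1,\te_\si(\om)^{-1})}{e_\si(1,\te_\si(\om))}e_\si(k,\te_\si(\om)),
\eeqq
producing the incoming Jost solution with unit coefficient. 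Combining with the second summand then matches the first line of \eqref{eq:23} with $s_{\si\si}(\om)=[T(\om)^{-1}A(\om)]_{\si\si}-e_\si(1,\te_\si(\om)^{-1})/e_\si(1,\te_\si(\om))$.

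The main obstacle is the algebraic cancellation in the diagonal case: the factor $D(\om)\cE(1,\om)^{-1}$ appearing in the definition \eqref{eq:64} of $U(k,\om)$ has been tuned to invert precisely the denominator and normalizing constant in \eqref{eq:53}, so that the incoming wave emerges with coefficient $1$; everything else is structural. Once (i)--(iii) are in place, the theorem recalled in Section~\ref{sec2} from \cite{LM} delivers a unique extension of $\psi^\si$ via \eqref{eq:07} to a solution of \eqref{eq:01} on all of $\cA$, and the meromorphy of $U(k,\om)$ in a neighborhood of $\ov{\D}$ (inherited from $T(\om)^{-1}$, $P(k,\om)$ and $\cE(k,\om)$) supplies the analyticity property required of a special solution.
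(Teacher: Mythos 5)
Your proposal is correct and follows essentially the same route as the paper: the boundary condition is verified for the whole matrix $U(k,\om)$ by reducing it, via the initial data $P(0,\om)=I$, $P(1,\om)=0$, to the defining identity for $T(\om)$, and representation \eqref{eq:23} is read off from \eqref{eq:64} together with the Wronskian formula \eqref{eq:53}. You merely make explicit the diagonal cancellation producing the unit-coefficient incoming Jost solution, which the paper leaves as "a consequence of \eqref{eq:64} and \eqref{eq:53}."
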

\label{L:5}
\begin{proof} Representation  \eqref{eq:23} for $\om\in T^+_\si$ is just a consequence of  \eqref{eq:64}  and \eqref{eq:53}. \\ \\
In order to prove that $\psi^\si(k,\omega)$ meets the boundary condition for $\om\in T^+_\si$, we prove that this condition is met by the whole matrix-function  
$U(k,\om)$:
\beq
\label{eq:66}
U(0,\om)=\cR(\la(\om))B(0)U(1,\om),\quad \ \om \in \D:
\eeq
This implies that   $\psi^\si(k,\omega)$ also  meets the boundary condition.  Relation \eqref{eq:66} is straightforward: after factoring out the inessential factor $B B(0)^{-1}D(\om)\cE (1,\om)^{-1}$ in the definition 
\eqref{eq:64}  it becomes
$$
-I+\cE(0,\om)T(\om)^{-1}=\cR(\la(\om))B(0)\cE(1,\om)T(\om)^{-1},
$$\
which is just the definition of $T(\om)$.\\ \\
So far the special solution is not defined at a point $\omega_0\in T^+_\sigma$ in case $\omega_0$ is a pole of $U(k,\omega)$. This case will be considered in the following sections: in section \ref{SingU(k)} we prove that all poles of $U(k,\omega)$ are simple, later in section \ref{sec8} we show that. if $\omega_0\in T_\sigma^+$, the function $U(k,\omega){\mbf  n}_\si$ is continuos at $\omega_0$ even if $\omega_0$ is a pole of $U(k,\omega)$. In particular
\begin{equation}\label{ast}
\text{Res}_{\omega_0}u_{\nu\sigma}(\omega)=0,\quad \nu\in\mathcal{C}
\end{equation}
\end{proof}
\section{Singularities  of  $U(k,\om)$.}\label{SingU(k)}
  
The matrix function $U(k,\om)$  is  analytic in a vicinity of $\bar \D$,  in particular it has a finite number of poles which belong to $\overline{\D}$.   
 \begin{Lemma}
  \label{L:07}
   There is a finite set $\Omega\subset \overline{\D}$ such that all poles of the matrix function $U(k,\om)$  $\in \overline{ \D}$
 belong to 
$\Omega \cup\{0\}$. In the origin $U(k,\om)$ has pole of order $k$, all poles in $\Omega$ are simple.
 \end{Lemma}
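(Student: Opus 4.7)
The plan is to trace the singularities of $U(k,\omega)$ factor by factor in the defining formula \eqref{eq:64}. By the exponential decay hypothesis the matrix $\mathcal{E}(k,\omega)$ is holomorphic in an open neighborhood of $\overline{\mathbb{D}}$, while $P(k,\omega)$ is a polynomial in $\lambda(\omega)=a+b(\omega+\omega^{-1})$ (by the recursion and the initial data \eqref{eq:52}) and hence meromorphic with its only pole at $\omega=0$. By the corollary following Lemma \ref{L:4.1a}, $T(\omega)^{-1}$ is holomorphic in a neighborhood of $\overline{\mathbb{D}}$ outside a finite set of points; the diagonal factor $D(\omega)\mathcal{E}(1,\omega)^{-1}$ is holomorphic off the discrete zero set of its diagonal entries. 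Collecting these, compactness of $\overline{\mathbb{D}}$ delivers a finite set $\Omega\subset \overline{\mathbb{D}}$ such that every pole of $U(k,\omega)$ lies in $\Omega\cup\{0\}$.

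For the pole at the origin I would compute local orders using the asymptotic $\theta_\sigma(\omega)\sim (b_\sigma/b)\omega$ together with the expansion \eqref{eq:13}: $\mathcal{E}(k,\omega)$ vanishes of order $k$, and $D(\omega)\mathcal{E}(1,\omega)^{-1}$ has a pole of order $2$. Since $\mathcal{R}(\lambda(\omega))\to 0$ as $\omega\to 0$, one has $T(0)=\mathcal{E}(0,0)$, which is invertible, so the contribution $\mathcal{E}(k,\omega)T(\omega)^{-1}BB(0)^{-1}D(\omega)\mathcal{E}(1,\omega)^{-1}$ is holomorphic at zero, whereas $-P(k,\omega)BB(0)^{-1}D(\omega)\mathcal{E}(1,\omega)^{-1}$ carries a pole of order exactly $k$ for $k\ge 2$, coming from the order $k-2$ pole of $P(k,\omega)$. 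The low-index cases $k=0,1$ are checked separately: for $k=1$ one has $P(1,\omega)=0$ and the surviving term $\mathcal{E}(1,\omega)T(\omega)^{-1}D(\omega)\mathcal{E}(1,\omega)^{-1}$ produces a simple pole; for $k=0$ an additional cancellation (using $T'(0)=\mathcal{E}'(0,0)$ because $\mathcal{R}(\lambda(0))=0$ and $\mathcal{E}(1,0)=0$) makes $U(0,\omega)$ holomorphic, consistent with pole of order $0$.

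\emph{The main obstacle} is showing that the poles in $\Omega$ are simple. They come from two sources: zeros of the diagonal entries $e_\sigma(1,\theta_\sigma(\omega))$ of $\mathcal{E}(1,\omega)$, and zeros of $\det T(\omega)$. Zeros of the first kind are simple as zeros of holomorphic functions of $\omega$, with non-vanishing derivative read off from \eqref{eq:13} and the non-vanishing of $c_\sigma(1)$. For zeros of $\det T(\omega)$, I would combine the inequality of Lemma \ref{L:4.1a} with the analyticity of $T(\omega)$ to rule out higher-order vanishing: if $T(\omega_0)\mathbf{v}=0$, then evaluating the sesquilinear form $\langle\mathcal{E}(1,\omega)^{*}B(0)T(\omega)\mathbf{v},\mathbf{v}\rangle$ at $\omega_0+\varepsilon e^{i\varphi}$ and expanding shows that its imaginary part cannot vanish faster than linearly in $\varepsilon$, because of the quantitative lower bound in the lemma. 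This forces $\det T$ to have a simple zero and $\ker T(\omega_0)$ to be one-dimensional, yielding a simple pole of $T(\omega)^{-1}$ and hence of $U(k,\omega)$. When $\omega_0$ is a common singularity of both sources, the explicit formula \eqref{eq:53} for $p_\sigma$ exhibits a cancellation between its numerator and denominator that keeps the total order at one.
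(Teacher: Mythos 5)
Your treatment of the finiteness of the singular set and of the pole at the origin is sound and in fact more explicit than the paper's (the order count $-P(k,\omega)$ of pole order $k-2$ times the order-two pole of $D(\omega)\cE(1,\omega)^{-1}$, and the cancellation $-I+\cE(0,\omega)T(\omega)^{-1}=\cR(\lambda(\omega))B(0)\cE(1,\omega)T(\omega)^{-1}$ for $k=0$, are exactly what is needed). The gap is in the simplicity argument, which is the heart of the lemma.

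First, your claim that the zeros of $\omega\mapsto e_\sigma(1,\theta_\sigma(\omega))$ are simple ``read off from \eqref{eq:13} and the non-vanishing of $c_\sigma(1)$'' is not a proof: a holomorphic function with nonzero leading normalization can perfectly well have multiple zeros. The correct reason (used later in the paper, in Lemma \ref{L:08}) is the Wronskian identity \eqref{eq:38a07}: if $e_\sigma(1,\theta_\sigma(\hat\omega))$ and its $\omega$-derivative vanished simultaneously, then $\{\dot e_\sigma,e_\sigma\}_\sigma(0)=0$ while the right-hand side $\dot\lambda\sum_k e_\sigma(k,\cdot)^2$ is nonzero away from $\omega=\pm1$. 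Second, your argument for $\det T$ does not deliver what you claim: the inequality of Lemma \ref{L:4.1a} degenerates precisely at the real points where $\bar\theta_\sigma=\theta_\sigma$, and expanding the form at $\omega_0+\varepsilon e^{i\varphi}$ gives an upper bound $O(\varepsilon)$ against a lower bound $C\varepsilon\|\mathbf v\|^2$ --- no contradiction, hence no conclusion that $\det T$ vanishes simply or that $\ker T(\omega_0)$ is one-dimensional. Indeed neither assertion is needed (and the kernel can be higher-dimensional; see Corollary \ref{Rem:51}, where a whole eigenspace appears at such points). Third, your final sentence about a ``common singularity of both sources'' is not an argument: formula \eqref{eq:53} concerns $p_\sigma$, which is entire in $\lambda$ and sits in the \emph{other} summand of \eqref{Eq:23a01}; it cannot cancel a double pole arising from $T(\omega)^{-1}$ meeting $\cE(1,\omega)^{-1}$.

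The missing idea is the paper's single stroke that handles all three difficulties at once: substitute $\mathbf x=T(\omega)^{-1}BB(0)^{-1}D(\omega)\cE(1,\omega)^{-1}\mathbf y$ into \eqref{eq:59}. Then $\cE(1,\omega)^*B(0)T(\omega)\mathbf x=\cE(1,\omega)^*\cE(1,\omega)^{-1}BD(\omega)\mathbf y$ with $\cE(1,\omega)^*\cE(1,\omega)^{-1}$ unitary and $D(\omega)$ bounded near $\hat\omega$, so Cauchy--Schwarz on the left and the lower bound $C|\omega-\bar\omega|\|\mathbf x\|^2$ on the right give
$\|T(\omega)^{-1}BB(0)^{-1}D(\omega)\cE(1,\omega)^{-1}\|=O(|\omega-\bar\omega|^{-1})$
as $\omega\to\hat\omega$ nontangentially. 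A meromorphic matrix function whose norm is $O(|\omega-\hat\omega|^{-1})$ along one such path has at most a simple pole, regardless of how the singularities of $T(\omega)^{-1}$ and $\cE(1,\omega)^{-1}$ interact or how degenerate $\det T$ is. You should replace your case analysis by this uniform estimate (with the analogous tangential adjustment for $\hat\omega\in\T$).
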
  
   
   \begin{proof} The proof follows the pattern of Lemma 4.1 in \cite{LM}. 
  We rewrite \eqref{eq:64} as 
   \begin{multline}
\label{Eq:23a01}
U(k,\om)=  -P(k,\om)B B(0)^{-1}D(\om)\cE (1,\om)^{-1} + \\
\cE(k,\om) T(\om)^{-1}  B B(0)^{-1}D(\om)\cE (1,\om)^{-1}.
\end{multline}
Therefore all poles of $U(k,\om)$ in $\overline{ \D}$ belong to the finite set $\Omega\cup\{0\}$ where $\Omega$ includes all poles of $T(\om)^{-1}$ and $\cE(1,\om)^{-1}$ in $\overline{ \D}$.\\ 
That at the origin the functions $U(0,\om)$ and $U(1,\om)$ have poles of order 0 and 1 respectively follows
from the initial conditions for $P$.
   For $k\geq 2$ the main contribution to singularity of $U(k,\om)$ at zero comes from the first term in the
right-hand side of  \eqref{Eq:23a01} because $P(k,\om)$ is a polynomial of degree $k-2$ with respect to 
   $\la=a-b(\om+\om^{-1})$.\\ 
It now suffices to study the singularities  of the second term in  the right-hand side in \eqref{Eq:23a01}. Let $\Omega$ be the set of all such singularities in $\overline{\mathbb{D}}$. 
These singularities comes from the poles of $T(\omega)^{-1}$, that is, when det$(T(\omega))=0$ and also  from the zeros of det$(\cE(1,\om))=\prod_{\sigma\in\mathcal{C}}e_\sigma(1,\theta_\sigma(\omega))$.\\
Actually  $U(k,\om)$ cannot have poles outside $\T\cup (-1,1)$. This is due to the poles of $T(\omega)^{-1}$ are located in $\T\cup (-1,1)$ and  $e_\sigma(1,\theta_\sigma(\omega))=0$ only if $\theta_\sigma(\omega)\in (-1,1)$, see \eqref{eq:36}, and thus $\omega\in\T\cup(-1,1)$.\\
Let  $\hat\om\in (-1,0)\cup(0,1)$.  
We are going to use \eqref{eq:59} as $\omega$ approaches $\hat\om$. We have 
 $|\te_\si(\om)-\te_\si(\bar\om)|\asymp |\om-\bar\om|$, so for any $\mbf y\in l^2(\cC)$ relation   \eqref{eq:59} with
 \[
 \mbf x= T(\om)^{-1}BB(0)^{-1}D(\om)\cE(1,\om)^{-1} \mbf y
 \]  
 gives 
 \begin{multline*}
| \lng \cE(1,\om)^*\cE(1,\om)^{-1}BD(\om)\mbf y, T(\om)^{-1}BB(0)^{-1}D(\om)\cE(1,\om)^{-1} \mbf y \rng |\geq \\
C |\om-\bar\om| \| T(\om)^{-1}BB(0)^{-1}D(\om)\cE(1,\om)^{-1} \mbf y    \|^2.
 \end{multline*}
Since $\cE(1,\om)^*\cE(1,\om)^{-1}$  is unitary and also $| \mbox{det}D(\om)|$ stays bounded from below near 
 $\hat\om$,  the Schwartz inequality gives
 \[
 \|T(\om)^{-1}BB(0)^{-1}D(\om)\cE(1,\om)^{-1} \mbf y  \| \leq \mbox {Const} \frac{\|\mbf y\|}{|\om-\bar \om|} \
 \mbox{as} \ \om\to \hat\om,
  \]
 this is possible only in case $\hat\om$ is a simple pole of $T(\om)^{-1}BB(0)^{-1}D(\om)\cE(1,\om)^{-1}   $.\\
In  case when $\hat\om\in \T$, the reasoning goes in a similar way, it suffices to let $\om$ approach $\hat\om$ in a way that  $|\te_\si(\om)-\hat\te_\si(\om)| \asymp |\om-\hat\om|$. 
\end{proof}
%
%
%
%
\section{Relation for the scattering coefficients}\label{sec5}

The special solution $\psi^\si(\alpha,\om)$ is now well-defined for all $\om \in \T\setminus\Omega$. Thus
 the non-diagonal scattering coefficients $s_{\si\gamma}(\om)$
$\si\neq \gamma$ are also well-defied for all $\om\in \T\setminus\Omega$. Together with $\psi^\si(\om)$ they are analytic in a vicinity of $\overline{\D}$.\\ \\ 
The scattering coefficients $s_{\si,\si}(\om)$ are so far well-defined for $\om\in T_\si^+$ only. 
For $\om \in T_\si^-$ the corresponding value $\te_\si(\om)$ belongs to the unit disk. One
 can construct (similarly to how this is done in Theorem 1.4.1 in \cite{AM}  for the  continuous case) 
a real-valued
  solution  $e_\si^{(1)}(k,\te_\si)$ of 
\eqref{eq:10} such that 
\beq
\label{eq:66a}
e_\si^{(1)}(k,\te_\si)=\te_\si^{-k} (1+o(1)), \ k\to \infty.
\eeq
This choice is not unique since by adding any multiple of $e_\si$ we obtain a solution which still meets this relation. However we fix some choice of functions 
$e_\si^{(1)}(k,\te_\si)$. The functions $p_\si$ can be represented as 
\beq
\label{eq:66b}
p_\si(k,\om)= \frac{\gb_\si(0)}{b_\si}
\frac{e_\si(k,\te_\si(\om))e_\si^{(1)}(1,\te_\si(\om)) - e_\si^{(1)}(k,\te_\si(\om))e_\si(1,\te_\si(\om))}{\te_\si(\om)^{-1}-\te_\si(\om)},
\eeq
which yields 
 \beqq
 \psi^{\si}(\si(k),\om)=\te_\si(\om)^{-k}(1+o(1)), \ k\to \infty
 \eeqq
as it should be for a special solution.

\begin{Lemma} 
\label{L:6}
For each $\si,\gamma\in \cC$, $\si\neq \gamma$ we have 
\beq
\label{eq:67}
b_\gamma(\te_\gamma(\om)^{-1}-\te_\gamma(\om))s_{\gamma\si}(\om)= b_\si(\te_\si(\om)^{-1}-\te_\si(\om)) s_{\si\gamma}(\om), \ \om\in \overline{\D}\setminus \Omega
\eeq
In addition the scattering coefficients $s_{\gamma,\si}(\om)$ and $s_{\si,\gamma}(\om)$ are continuous up to $\T\setminus \Omega$ \end{Lemma}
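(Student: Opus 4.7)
The plan is to derive \eqref{eq:67} from Lemma~\ref{le:01} applied to the pair $(\psi^\si,\psi^\gamma)$ with $\si\neq\gamma$. Both $\psi^\si(\cdot,\om)$ and $\psi^\gamma(\cdot,\om)$ satisfy $\cL\psi=\la(\om)\psi$ with the same $\la(\om)$, so Lemma~\ref{le:01} gives
\[
\sum_{\nu\in\cC}\gb_\nu(0)\{\psi^\si,\psi^\gamma\}_\nu(0)=0.
\]
By Remark~\ref{Rem:01} each $\gb_\nu(k)\{\psi^\si,\psi^\gamma\}_\nu(k)$ is $k$-independent, so I can compute every contribution by letting $k\to\infty$ and reading off the leading asymptotics from the channel representations.

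I would first work on the open set $\om\in\D\setminus\Omega$, where $|\te_\nu(\om)|<1$ for every $\nu\in\cC$ (since $\te_\nu$ maps $\D$ into $\D$) and the asymptotics of $\psi^\si$, $\psi^\gamma$ along each channel are governed unambiguously by \eqref{eq:23a}. For $\nu\notin\{\si,\gamma\}$, both $\psi^\si(\nu(\cdot),\om)$ and $\psi^\gamma(\nu(\cdot),\om)$ are scalar multiples of the same Jost solution $e_\nu(\cdot,\te_\nu(\om))$, so their Wronskian vanishes identically in $k$. For $\nu=\si$, the asymptotics $\psi^\si(\si(k))=\te_\si(\om)^{-k}(1+o(1))$ and $\psi^\gamma(\si(k))=s_{\si\gamma}(\om)e_\si(k,\te_\si(\om))\sim s_{\si\gamma}(\om)\te_\si(\om)^k$ combine in the Wronskian so that the leading products, after multiplication by $\gb_\si(k)\to b_\si$, give the constant value $b_\si s_{\si\gamma}(\om)(\te_\si(\om)-\te_\si(\om)^{-1})$. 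The case $\nu=\gamma$ is symmetric and yields, by antisymmetry of the Wronskian, $b_\gamma s_{\gamma\si}(\om)(\te_\gamma(\om)^{-1}-\te_\gamma(\om))$. Summing the three contributions and equating to zero gives \eqref{eq:67} on $\D\setminus\Omega$.

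To finish I would invoke the identity principle: as noted at the start of this section, $s_{\si\gamma}$ and $s_{\gamma\si}$ ($\si\neq\gamma$) are analytic in a vicinity of $\overline{\D}$ with possible singularities confined to $\Omega$, and the factors $b_\nu(\te_\nu^{-1}-\te_\nu)$ are manifestly analytic; the identity therefore extends from $\D\setminus\Omega$ to all of $\overline{\D}\setminus\Omega$, and continuity of $s_{\gamma\si}$ and $s_{\si\gamma}$ on $\T\setminus\Omega$ is an immediate consequence. The main delicate point is the bookkeeping in the Wronskian on the distinguished channels $\si$ and $\gamma$: one of the two asymptotic components grows exponentially while the other decays, yet the leading and sub-leading products cancel to leave a finite, $k$-independent limit determined only by the scattering coefficient. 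Working in the interior of $\D$ through \eqref{eq:23a} treats both channels on the same footing and bypasses the need to consider the arc-wise representation \eqref{eq:23} on $T_\si^+\cap T_\gamma^+$, whose non-emptiness is not \emph{a priori} guaranteed for an arbitrary pair of channels.
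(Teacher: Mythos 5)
Your proposal is correct and follows essentially the same route as the paper: both apply Lemma \ref{le:01} to the pair $\xi=\psi^\si$, $\eta=\psi^\gamma$ and evaluate the three types of channel Wronskians from the representations \eqref{eq:23}--\eqref{eq:23a}, the only (cosmetic) difference being that you read off the Wronskians from the $k\to\infty$ asymptotics on $\D\setminus\Omega$ and then extend by analyticity, while the paper evaluates them directly at $k=0$ on all of $\overline{\D}\setminus\Omega$. The signs and the final identity agree with the paper's computation.
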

\begin{proof}
Relation \eqref{eq:67} follows from  \eqref{eq:38} for $\xi=\psi^\si(\om)$, $\eta=\psi^\gamma(\om)$
if one takes into account  that according  \eqref{eq:23} and \eqref{eq:23a} we have 
 \beqq
\gb_\nu(0)\{\psi^\si,\psi^\gamma\}_{\nu}(0)= \begin{cases}
                        -b_\si (\te_\si(\om)^{-1}-\te_\si(\om))s_{\sigma\gamma}(\omega), & \nu=\si; \\
                         b_\gamma (\te_\gamma(\om)^{-1}-\te_\gamma(\om))s_{\gamma\sigma}(\omega), & \nu=\gamma; \\
                         0,  & \nu\neq\si,\gamma.
                                                                      \end{cases}
\eeqq
Continuity of the scattering coefficients $s_{\gamma\sigma}(\omega)$ follows from the definition of $U(k,\omega)$ and the fact that it has finitely many poles contained in $\Omega$.

\end{proof}

\begin{Cor}\label{Rem:51}
Let $\hat{\omega}\in\overline{\mathbb{D}}$, then 
\begin{equation*}
\Res_{\hat\om} u_{\si\nu}(k,\omega)=\Res_{\hat\om} u_{\nu\si}(k,\omega)=0,\quad\hat{\omega}\in T^{+}_\sigma\cup T_\nu^+,\quad\nu\in\mathcal{C}
\end{equation*}
\end{Cor}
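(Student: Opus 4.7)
The plan is to combine the explicit formula \eqref{eq:64} for $U(k,\omega)$ with the symmetry from Lemma \ref{L:6} and the a priori bound of Lemma \ref{L:4.1a}, thereby reducing the corollary to the assertion that the $\sigma$-column of $T(\omega)^{-1}$ is holomorphic at $\hat\omega$.

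Unpacking \eqref{eq:64} componentwise yields, for $\nu\neq\sigma$,
\begin{equation*}
u_{\nu\sigma}(k,\omega)=e_\nu(k,\theta_\nu(\omega))\,[T(\omega)^{-1}]_{\nu\sigma}\cdot\frac{b_\sigma(\theta_\sigma(\omega)^{-1}-\theta_\sigma(\omega))}{\gb_\sigma(0)\,e_\sigma(1,\theta_\sigma(\omega))},
\end{equation*}
together with the analogous expression for $u_{\sigma\nu}$ in terms of $[T(\omega)^{-1}]_{\sigma\nu}$. Using Lemma \ref{L:6} the latter can be rewritten in terms of $[T(\omega)^{-1}]_{\nu\sigma}$ rather than $[T(\omega)^{-1}]_{\sigma\nu}$, so that at any $\hat\omega\in T^+_\sigma$ with $\theta_\sigma(\hat\omega)\neq\pm 1$ both entries are controlled by the $\sigma$-column of $T(\omega)^{-1}$ multiplied by factors that are holomorphic at $\hat\omega$ (because $e_\sigma(1,\theta_\sigma(\hat\omega))\neq 0$). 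The diagonal entry $u_{\sigma\sigma}$ acquires an extra $P(k,\omega)$-term, but that is holomorphic there for the same reason; the case $\hat\omega\in T^+_\nu$ is symmetric.

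To show holomorphy of the $\sigma$-column of $T^{-1}$, I substitute $\mbf x=T(\omega)^{-1}\mbf n_\sigma$ into Lemma \ref{L:4.1a} to obtain
\begin{equation*}
|e_\sigma(1,\theta_\sigma(\omega))|\,\gb_\sigma(0)\,|[T^{-1}]_{\sigma\sigma}(\omega)|\ \geq\ C\sum_{\gamma\in\cC}|\bar\theta_\gamma(\omega)-\theta_\gamma(\omega)|\,|[T^{-1}]_{\gamma\sigma}(\omega)|^2.
\end{equation*}
For $\theta_\sigma(\hat\omega)\neq\pm 1$ the $\gamma=\sigma$ term on the right is bounded below by a positive constant times $|[T^{-1}]_{\sigma\sigma}|^2$, which forces $|[T^{-1}]_{\sigma\sigma}|$ to remain bounded as $\omega\to\hat\omega$. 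The entire left-hand side is then bounded, and every term on the right stays bounded too. By Lemma \ref{L:07} any pole of $[T^{-1}]_{\gamma\sigma}$ at $\hat\omega$ is simple. If $\hat\omega\in T^+_\gamma$ the coefficient $|\bar\theta_\gamma-\theta_\gamma|$ is bounded below; if $\hat\omega\in T^-_\gamma$, then $\theta_\gamma(\hat\omega)\in J_\gamma\subset\R$ and $|\bar\theta_\gamma(\omega)-\theta_\gamma(\omega)|$ vanishes at most linearly in $|\omega-\hat\omega|$. In either case, along a generic direction of approach the product $|\bar\theta_\gamma-\theta_\gamma|\,|[T^{-1}]_{\gamma\sigma}|^2$ would blow up at least like $|\omega-\hat\omega|^{-1}$, contradicting the boundedness. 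Hence no such pole exists, and substituting back into the formulas above shows that $u_{\nu\sigma}$ and $u_{\sigma\nu}$ are holomorphic at $\hat\omega$; the residues therefore vanish.

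The main obstacle is the finite set of exceptional points where $\theta_\sigma(\hat\omega)=\pm 1$ or $\theta_\nu(\hat\omega)=\pm 1$: at these the lower bound in Lemma \ref{L:4.1a} and the symmetry rewriting both degenerate. Such points are to be handled by a separate continuity / removable-singularity argument that propagates the vanishing of the residues from nearby interior points of $T^+_\sigma\cup T^+_\nu$.
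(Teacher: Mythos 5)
Your argument is correct in substance but follows a genuinely different route from the paper. The paper's proof is structural: at a point $\om_0$ where $\det T(\om_0)=0$ it takes a null vector $\mbf x$ of $T(\om_0)$, prolongs $\cE(k,\om_0)\mbf x$ to a genuine solution of \eqref{eq:01}, and applies the Wronskian conservation law of Lemma \ref{le:01} to $\xi$ and $\bar\xi$ to conclude $\sum b_\si|x_\si|^2(\bar\te_\si(\om_0)-\te_\si(\om_0))=0$; hence the null vector is supported on channels with $\om_0\in T_\si^-$, the relevant cofactors of $T(\om_0)$ vanish, and (since the poles are simple, by Lemma \ref{L:07}, and $e_\nu(1,\te_\nu(\om_0))\neq0$ on $T_\nu^+$) the residues of $u_{\nu\si}$ vanish; Lemma \ref{L:6} then transfers this to $u_{\si\nu}$. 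You instead run the quantitative coercivity estimate of Lemma \ref{L:4.1a} with $\mbf x=T(\om)^{-1}\mbf n_\si$, first bounding $[T^{-1}]_{\si\si}$ and then the whole $\si$-column, which rules out any pole of that column at $\hat\om\in T_\si^+$; this is essentially the same mechanism the paper uses to prove invertibility of $T$ and simplicity of the poles in Lemma \ref{L:07}, now pushed one step further, and it has the small advantage of not needing simplicity of the poles as an input. Both arguments ultimately rest on the same positivity identity \eqref{eq:58}--\eqref{eq:59}, and both use Lemma \ref{L:6} identically for the transposed entries.

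Two caveats. First, your closing remark about the band-edge points $\te_\si(\hat\om)=\pm1$ does not actually repair anything: residues of a meromorphic function vanish trivially at all non-pole points, so ``propagating the vanishing of residues from nearby points'' gives no information about $\hat\om$ itself. At such points both your coercivity bound and the factor $\te_\si^{-1}-\te_\si$ in Lemma \ref{L:6} degenerate; note, however, that the paper's own argument degenerates there in exactly the same way (the term $b_\si|x_\si|^2(\bar\te_\si-\te_\si)$ vanishes without forcing $x_\si=0$), so this is a shared boundary issue rather than a defect specific to your proof. Second, you should exclude (or treat separately) the case where $\la(\hat\om)$ is a pole of $\cR(\la(\om))$, since Lemma \ref{L:4.1a} is only available off the set $\cO$; the paper makes the same restriction explicitly.
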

\begin{proof}
Denote
\begin{equation*}
A_{\nu\sigma}=\text{det}(\tilde{T}_{\nu\sigma}(\omega)),\quad \omega\in\overline{\mathbb{D}},\quad \nu,\sigma\in\mathcal{C},
\end{equation*}
where $\tilde{T}_{\nu\sigma}(\omega)$ denotes the matrix which comes from $T(\omega)$ once we have removed the $\sigma-$column and the $\nu-$row. \\
Let now $\om_0\in \T$, $\lambda(\omega_0)$, be a regular point of $\cR(\lambda(\omega))$ and $\det(T(\omega_0))=0$. Then there exists $\mbfx=\{x_\sigma\} \in l^2(\cC)$ such that $T(\om_0)\mbfx=0$, thus the vector function  $\vec{\xi}(k)= \cE(k,\om_0)\mbfx$ satisfies the equations    \eqref{eq:02}  on the channels as well as the boundary  conditions \eqref{eq:08b}. Hence it can be prolongated to a solution of the whole problem \eqref{eq:01}. \\ 
For $\mbfx\in \l^2(\mathcal{C})$, denote $\supp \mbfx =\{\sigma\in\mathcal{C};\text{ } x_\sigma\neq 0 \}$ and  let  $\xi=\{\xi(\alpha)\}_{\alpha\in \cA}$ be a solution to  \eqref{eq:01} with $\la=\la(\om_0)$ which is obtained by prolongation of  
$\vec{\xi}(k)$. Then $\eta(\alpha)= \bar{\xi}(\alpha)$ also solves this problem. Relation \eqref{eq:38}  yields
\[
0= \sum_{\si\in \supp \mbfx} \gb_{\sigma}(0)\{\xi,\eta\}_{\si}(0)= 
                     \sum_{\si\in \supp \mbfx} b_\sigma  |x_\sigma|^2 (\bar{\theta}_\sigma(\om_0)- {\theta}_\sigma(\om_0))
\]
Therefore $\om_0\in T_\si^-$ for each $\sigma\in \supp \mbfx$. Thus $A_{\sigma\nu}(\omega_0)=0$, for $\omega_0\in T_\nu^+$, $\sigma\in\mathcal{C}$.\\
Since the poles of $U(k,\omega)$ are simple and \eqref{eq:36} implies that $e_\nu(1,\theta_\nu(\hat{\omega}))\neq 0$ if $\omega\in T_\nu^+$,  we obtain  
$$
\Res_{\omega_0} u_{\nu\si}(k,\omega)=0 
$$
A simple application of lemma  \ref{L:6} gives us  $\Res_{\omega_0} u_{\sigma\nu}(k,\omega)=0 $
\end{proof}

\begin{Cor}\label{Cor:51}
Let $\omega\in T_\sigma^+\cup T_\nu^+$, $\sigma,\nu\in\mathcal{C}$ and $\sigma\neq\nu$. Then 
\begin{equation*}
s_{\nu\sigma}(\omega^{-1})=\overline{s_{\nu\sigma}}(\omega)
\end{equation*}
In particular $|s_{\nu\sigma(\omega)}|^2=s_{\nu\sigma}(\omega)s_{\nu\sigma}(\omega^{-1})$
\end{Cor}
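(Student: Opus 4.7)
The proof hinges on a Schwarz-type reflection symmetry $s_{\nu\sigma}(\bar\omega)=\overline{s_{\nu\sigma}(\omega)}$, which on the unit circle $\T$ becomes the desired identity since $\bar\omega=\omega^{-1}$. I would establish this symmetry by showing that every ingredient in the formula \eqref{eq:64} for $U(k,\omega)$ is real-analytic in the sense $F(\bar\omega)=\overline{F(\omega)}$, using that all the underlying data (the matrix $\cL$, the recurrence \eqref{eq:10}, the initial conditions \eqref{eq:52}) are real. Once $U(k,\bar\omega)=\overline{U(k,\omega)}$ is in hand, the claim drops out after reading off the $(\nu,\sigma)$-entry and restricting to $\T$.

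Concretely, $\lambda(\omega)$ is a real polynomial in $\omega+\omega^{-1}$; the Zhukovsky map $\theta_\sigma=W_\sigma^{-1}\circ W$ satisfies $\theta_\sigma(\bar\omega)=\overline{\theta_\sigma(\omega)}$ for the branch fixed in \eqref{eq:20}, by analytic continuation of $W_\sigma(\overline{\theta_\sigma(\omega)})=W(\bar\omega)$ from a real base point in $\D$; the Jost functions obey $\overline{e_\sigma(k,\theta)}=e_\sigma(k,\bar\theta)$, by uniqueness of the solution of \eqref{eq:10} with asymptotic $\theta^k(1+o(1))$; the polynomial solutions $p_\sigma(k,\omega)$ are real polynomials in $\lambda(\omega)$, thanks to the real initial data \eqref{eq:52}; and $\cR(\lambda(\omega))$ is a sum of real rank-one operators divided by $\lambda_j-\lambda(\omega)$, so $\cR(\lambda(\bar\omega))=\overline{\cR(\lambda(\omega))}$. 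Combining these with $B,B(0)$ real, one gets $T(\bar\omega)=\overline{T(\omega)}$, hence $T(\bar\omega)^{-1}=\overline{T(\omega)^{-1}}$, and also $D(\bar\omega)=\overline{D(\omega)}$; substituting into \eqref{eq:64} yields $U(k,\bar\omega)=\overline{U(k,\omega)}$ as meromorphic matrix functions in a neighbourhood of $\overline{\D}$.

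For $\nu\neq\sigma$, the representation $u_{\nu\sigma}(k,\omega)=s_{\nu\sigma}(\omega)\,e_\nu(k,\theta_\nu(\omega))$ read off from \eqref{eq:23}--\eqref{eq:23a}, together with the reflection for $e_\nu$ and $\theta_\nu$, immediately gives $s_{\nu\sigma}(\bar\omega)=\overline{s_{\nu\sigma}(\omega)}$ after dividing by the Jost factor at any $k$ where it does not vanish. Restricting to $\omega\in T_\sigma^+\cup T_\nu^+\subset\T$ and using Lemma \ref{L:6} to remain off the finite pole set $\Omega$, one has $\bar\omega=\omega^{-1}$, which is the first identity of the corollary; the second follows at once from $|s_{\nu\sigma}(\omega)|^2=s_{\nu\sigma}(\omega)\overline{s_{\nu\sigma}(\omega)}$. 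The principal technical obstacle is the branch-matching step for $\theta_\sigma$: since $W_\sigma(\theta)=W_\sigma(\theta^{-1})$, the conjugate $\overline{\theta_\sigma(\omega)}$ could a priori be either $\theta_\sigma(\bar\omega)$ or $\theta_\sigma(\bar\omega)^{-1}$, and one must verify by continuity from a real point that the branch singled out in \eqref{eq:20} is the former; once this is done, the rest is bookkeeping.
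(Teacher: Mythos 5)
Your argument is correct and follows essentially the same route as the paper, which simply asserts $u_{\nu\sigma}(k,\omega^{-1})=\overline{u_{\nu\sigma}(k,\omega)}$ ``by construction of $U(k,\omega)$'' and divides by the Jost factor; you have merely made explicit the reality/Schwarz-reflection checks (for $\lambda$, $\theta_\sigma$, $e_\sigma$, $p_\sigma$, $\cR$, $T$) that the paper leaves implicit, including the branch-matching for $\theta_\sigma$. No substantive difference in method.
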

\begin{proof}
We have $u_{\nu\sigma}(k,\omega)=e_\nu(k,\theta_\nu(\omega))s_{\nu\sigma}(\omega)$ and by construction of the matrix $U(k,\omega)$, we know that 
\begin{equation*}
u_{\nu\sigma}(k,\omega^{-1})=\overline{u_{\nu\sigma}}(k,\omega),\quad \omega\in T_\sigma^+\cup T_\nu^+
\end{equation*}
and the corollary follows.
\end{proof}

\section{Discrete spectra of the operator $\cL$ }\label{sec8}

  \begin{Lemma} 
  \label{L:08}
   Let $\hat\om\in \bar \D\setminus \{0\}$ be a pole  of $U(k,\om)$. Then $\la(\hat\om)=a-b(\hat\om+\hat\om^{-1})$
  is an eigenvalue of \eqref{eq:01}. If in addition $\hat\om\in T_\si^+$, then all elements $u_{\si\nu}(k,\om)$, $\nu\in \cC$ are
  regular at $\hat\om$. 
   \end{Lemma}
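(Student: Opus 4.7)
The plan is to recover an eigenfunction of $\cL$ as a residue of $U(k,\omega)$ at $\hat\omega$, using the factorization \eqref{Eq:23a01} together with Corollary \ref{Rem:51} to identify this residue, on each channel, as an exponentially decaying combination of Jost solutions.

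By Lemma \ref{L:07} the pole at $\hat\omega$ is simple, so for some $\mbf n\in l^2(\cC)$ the vector function $\vec\xi(k):=\Res_{\hat\omega}[U(k,\omega)\mbf n]$ on $\cA_0\cup\Gamma$ is not identically zero. Each column of $U(k,\omega)$ satisfies \eqref{eq:02} along every channel with $\lambda=\lambda(\omega)$; multiplying by $(\omega-\hat\omega)$ and letting $\omega\to\hat\omega$---legitimate because $\lambda(\omega)$ is regular at $\hat\omega\neq 0$---shows that $\vec\xi$ satisfies \eqref{eq:02} with $\lambda=\lambda(\hat\omega)$. Taking residues of the boundary identity \eqref{eq:66} yields $\vec\xi(0)=\cR(\lambda(\hat\omega))B(0)\vec\xi(1)$, provided $\cR(\lambda(\cdot))$ is regular at $\hat\omega$.

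The core of the argument is channel-wise $l^2$ control of $\vec\xi$. For a pole arising from $\det T(\hat\omega)=0$, only the summand $\cE(k,\omega)T(\omega)^{-1}$ in \eqref{Eq:23a01} is singular, so the residue has the form $\cE(k,\hat\omega)\mbf w$ for some $\mbf w\in l^2(\cC)$; its $\sigma$-component equals $e_\sigma(k,\theta_\sigma(\hat\omega))\,w_\sigma$. For $\sigma$ with $\hat\omega\in T_\sigma^-$ (i.e.\ $|\theta_\sigma(\hat\omega)|<1$) this decays exponentially; for $\sigma$ with $\hat\omega\in T_\sigma^+$, Corollary \ref{Rem:51} forces the whole $\sigma$-row of $\Res U$ to vanish, so $w_\sigma=0$. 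For a pole arising from $e_\sigma(1,\theta_\sigma(\omega))=0$---which forces $\theta_\sigma(\hat\omega)\in(-1,1)$ and hence $\hat\omega\in(-1,1)$---substituting the representation \eqref{eq:53} of $p_\sigma$ into \eqref{Eq:23a01} and using $e_\sigma(1,\theta_\sigma(\hat\omega))=0$ kills the growing $e_\sigma(k,\theta_\sigma(\hat\omega)^{-1})$ piece, leaving only a decaying multiple of $e_\sigma(k,\theta_\sigma(\hat\omega))$. In every case $\vec\xi\in l^2(\cA_0)$.

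With the boundary condition and the channelwise $l^2$ bound established, the theorem of Section \ref{sec2} prolongs $\vec\xi$ uniquely to a function $\xi\in\cM(\cA)$ with $\cL\xi=\lambda(\hat\omega)\xi$; since the prolongation onto the finite set $\cA_1$ is given by the sum \eqref{eq:07}, we have $\xi\in l^2(\cA)\setminus\{0\}$, so $\lambda(\hat\omega)$ is an eigenvalue of $\cL$. The second assertion of the lemma is an immediate restatement of Corollary \ref{Rem:51}. The main technical hurdle I foresee is the degenerate case $\lambda(\hat\omega)\in\{\lambda_j\}_{j=1}^M$ at which $\cR(\lambda(\omega))$ itself has a pole; the boundary step must then be recast via a careful Laurent expansion, which should be manageable because $\cR$ enters $U$ only through the combination $T(\omega)=\cE(0,\omega)-\cR(\lambda(\omega))B(0)\cE(1,\omega)$ of \eqref{eq:54}.
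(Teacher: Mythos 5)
Your argument is correct and follows essentially the same route as the paper's proof: identify the (simple) residue of $U(k,\cdot)$ at $\hat\omega$ channel-wise as a multiple of the decaying Jost solution $e_\sigma(k,\theta_\sigma(\hat\omega))$, invoke Corollary \ref{Rem:51} to annihilate the components on channels with $\hat\omega\in T_\sigma^+$, and prolong through the boundary condition to an $l^2$ eigenfunction, with the degenerate case $\lambda(\hat\omega)\in\{\lambda_j\}_{j=1}^M$ likewise deferred in the paper (to Lemma 4.3 of \cite{LM}). The one slip is your appeal to \eqref{eq:53} at a zero of $e_\sigma(1,\theta_\sigma)$: there $\theta_\sigma(\hat\omega)\in(-1,1)$, so $e_\sigma(k,\theta_\sigma(\hat\omega)^{-1})$ need not be defined and one must instead use the representation \eqref{eq:66b} in terms of the second solution $e_\sigma^{(1)}$, which gives the same conclusion that $p_\sigma(k,\hat\omega)$ is proportional to the decaying solution $e_\sigma(k,\theta_\sigma(\hat\omega))$ (this is the paper's identity \eqref{Eq:33ab}).
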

   \begin{proof}     
   Let $\hat\om\in \bar\D\setminus \{0\}$ be a (simple)  pole of $U(k,\om)$.  Denote
   \beqq
   \label{Eq:24}
   a_\nu(\om)= \frac {\te_\nu(\hat\om)-\te_\nu(\hat\om)^{-1}}{\te_\nu( \om)-\te_\nu( \om)^{-1}} (\om-\hat\om), \ \nu\in \cC; \quad A(\om)= \di\{a_\nu(\om)\}_{\nu\in \cC}.
   \eeqq
   We then have
   \beqq
   \label{Eq:25}
   \Res_{\hat\om} U(k,\om)=\lim_{\om\to \hat\om}   U(k,\om)A(\om).
   \eeqq
For each $\nu\in \cC$ and for each $\om\neq\hat\om$ the $\nu$-th column of $ U(k,\om)A(\om)$
   $$
 \vec{\phi}^\nu(k,\om)=  \left ( \phi^\nu(\si(k),\om)   \right )_{\si\in\cC}=U(k,\om)A(\om) \mathbf{n}_\nu
   $$
   can be prolongated into $\cA_1$ to a   solution of \eqref{eq:01}  with $\la=\la(\om)$ according \eqref{eq:07}:
   \beqq
   \label{Eq:26}
   \phi^\nu(\alpha, \om)=  \sum_{\gamma\in \cC} r(\alpha, \gamma(0); \la) \gb_\gamma(0)\phi^\nu(\gamma(1),\om); \ \alpha \in \cA_1   
   \eeqq
   Since $U(k,\omega)$ has a simple pole at $\hat{\omega}$, there exists the limit
\begin{equation*}
 \vec{\phi}^\nu(k, \hat{\om})=\lim_{\omega\to\hat{\omega}} \vec{\phi}^\nu(k, \om)
\end{equation*}
   \medskip
   
   {\bf Claim} {\em The vector $ \vec{\phi}^\nu(k,\hat\om)$   also can be prolongated to a  solution of the problem 
   \eqref{eq:01}}.\\ 
In case $\lambda(\hat{\omega})$ is not a pole of $\cR(\lambda(\omega))$, the prolongation is straightfoward. By \eqref{eq:07}, if $\lambda(\hat{\omega})$ is at the same time an eigenvalue of $\cL_1$ one can apply the same reasonings as in lemma 4.3 in \cite{LM}. We omit the details.\\ \\
Let now 
\begin{equation*}
T(\omega)^{-1}=(\tau_{\sigma\nu}(\omega))_{\sigma,\nu\in\mathcal{C}},
\end{equation*}
and denote
  \beq
  \label{EQ:01}
   h_{\si\nu}(\om)= -\frac{b_\nu}{\gb_\nu(0)}\frac{\te_\nu(\hat\om)-\te_\nu(\hat\om)^{-1}}{e_\nu(1, \te_\nu(\om))}(\om-\hat \om)\tau_{\si\nu}(\om).
   \eeq
It follows from  \eqref{eq:64}  that for $\si\neq \nu$ we have
  \beq
 \label{Eq:33}
  \Res_{\hat\om} u_{\si\nu} (k,\cdot)= \phi^\nu(\si(k),\hat\om) =  e_\si(k,\hat\om)h_{\sigma\nu}(\hat{\omega})
  \eeq 
here we denote $\gm_{\si\nu}(\hat\om)=h_{\si\nu}(\hat\om)$.\\ \\
If $e_\nu(1,\te_\nu(\hat\om))\neq 0$, representation \eqref{Eq:33} is valid for $\si=\nu$ as well.
Assume that  $e_\nu(1,\te_\nu(\hat\om))= 0$.   Then
\beq
\label{Eq:33ab}
 p_\nu(k,\hat\om)e_\nu(0,\te_\nu(\hat\om))=e_\nu(k, \te_\nu(\hat\om)),
\eeq 
because the expressions in both sides satisfy the same recurrence equation  and the same initial conditions, and also it follows from lemma \ref{le:02a}
that $\dot e_\nu(1,\te_\nu(\hat\om))\neq 0$ because    $\ e_\nu(1,\te_\sigma(\hat{\omega}) )$ and $\dot e_\nu(1,\te_\sigma(\hat{\omega}) )$  cannot vanish simultaneously. 
Thus we again obtain \eqref{Eq:33}, yet now
  \beqq
  \label{Eq:37}
  m_{\nu\nu}(\hat\om)= \frac{b_\nu}{\gb_\nu(0)}\frac{\te_\nu(\hat\om)-\te_\nu(\hat\om)^{-1}}
  {\{e_\nu(\hat\om),\dot e_\nu(\hat\om)\}_\nu(0)}+h_{\nu\nu}(\hat\om).
  \eeqq
  Representation  \eqref{Eq:33}  is now valid for all $\nu,\si\in \cC$. \\ \\  
If $\hat\om \in \D$ it follows from  \eqref{Eq:33} that $\phi^\nu(\al,\hat\om)$ is an eigenfunction of $\cL$ with $\la(\hat\om)$ as eigenvalue. If $\hat\om\in \T$, we may have $\te_\si(\hat\om)\in \T$, i.e., $\hat{\omega}\in T^+_\sigma$ for some $\si\in \cC$.  It follows from corollary \ref{Rem:51} that    for such $\si$ we have $m_{\si\nu}(\hat\om)=0$, in particular
  \beqq
  \label{Eq:37b01}   
  \Res_{\hat\om} u_{\si\nu}(k, \cdot)=\phi^\nu(\si(k), \hat\om)=0, \ \hat\om \in \T,   \te_\si(\hat\om) \in \T.
  \eeqq
   and, again $\phi^\nu(\al,\hat\om)$ is an eigenfunction of $\cL$ with $\la(\hat\om)$ as eigenvalue.
   \end{proof}

Consider the matrix 
 \beqq
 \label{Eq:37b02}
 \gm(\hat\om)= \left ( m_{\si\nu}(\hat\om) \right )_{\si,\nu\in \cC}
  \eeqq
Properties of  $\gm(\hat\om)$ are summarized in the statement below

 \begin{Lemma}
 \label{L:09}
 Let $\hat\om\in \Omega$.  Then 
 \beq
 \label{Eq:39}  
 \Res U(k,\hat\om)= \cE(k, \hat\om)\gm(\hat\om), \ k=0,1,2, \ldots.
 \eeq
   The diagonal  elements $m(\nu,\nu; \hat\om)$ satisfy 
  \beq
  \label{Eq:36}
  \|\phi^\nu(\hat\om)\|^2=-\frac{b_\nu(1-\te_\nu(\hat\om)^{-2})}{b(1- \bar{\hat{\omega}}^{-2})}\theta_\nu(\hat\om)
        m_{\nu\nu}(\hat\om),
 \eeq 
   where $\phi^\nu=\phi^\nu(\al, \hat\om)$ is the eigenvector  of $\cL$, corresponding to the eigenvalue $\la(\hat\om)$ and such that 
  \beq
  \label{Eqq:40}
  \phi^\nu(\si(k), \hat\om)   = e_\si(k,\te_\si(\hat\om)) m(\si,\nu;\hat\om) \ \si\in \cC, \ k\geq 0.
 \eeq 
   \end{Lemma}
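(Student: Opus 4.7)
The first claim \eqref{Eq:39} follows directly from Lemma~\ref{L:08}: the relation \eqref{Eq:33} derived there gives, entry by entry,
$$
\Res_{\hat\om}u_{\si\nu}(k,\cdot)=e_\si(k,\te_\si(\hat\om))\,m_{\si\nu}(\hat\om),\quad \si,\nu\in\cC,
$$
where the value of $m_{\nu\nu}$ carries an extra term handling the case $e_\nu(1,\te_\nu(\hat\om))=0$. Assembling these scalar identities in matrix form gives exactly $\Res U(k,\hat\om)=\cE(k,\hat\om)\gm(\hat\om)$.

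For the main formula \eqref{Eq:36} the plan is to compute $\|\phi^\nu(\hat\om)\|^2$ directly by splitting
$$
\|\phi^\nu\|^2=\sum_{\alpha\in\cA_1}|\phi^\nu(\alpha)|^2+\sum_{\si\in\cC}\sum_{k=1}^\infty|\phi^\nu(\si(k))|^2
$$
and treating each sum with the Wronskian identities of Section~\ref{sec4}. By Corollary~\ref{Rem:51}, the channels $\si$ with $\te_\si(\hat\om)\in\T$ drop out (on them $m_{\si\nu}(\hat\om)=0$), while on the remaining channels $\te_\si(\hat\om)\in(-1,1)$ the Jost solution is real and square-summable, and Lemma~\ref{le:02a} gives
$$
\sum_{k=1}^\infty e_\si(k,\te_\si(\hat\om))^2=-\frac{\gb_\si(0)\{\dot e_\si,e_\si\}_\si(0)}{\dot\la(\hat\om)}.
$$
For the central sum I would apply Lemma~\ref{le:02} to the family $\xi(\alpha,\om):=(\om-\hat\om)\psi^\nu(\alpha,\om)$ together with $\eta=\phi^\nu$; the function $\xi$ is analytic at $\hat\om$ (since $\psi^\nu$ has a simple pole there by Lemma~\ref{L:07}) with $\xi(\alpha,\hat\om)=\phi^\nu(\alpha)$, and a Taylor expansion $\psi^\nu = \phi^\nu/(\om-\hat\om)+\chi^\nu+O(\om-\hat\om)$ shows that $\dot\xi(\alpha,\hat\om)=\chi^\nu(\alpha)$, the regular part of $\psi^\nu$ at $\hat\om$. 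Lemma~\ref{le:02} then produces
$$
\dot\la(\hat\om)\sum_{\alpha\in\cA_1}|\phi^\nu(\alpha)|^2=\sum_{\si\in\cC}\gb_\si(0)\{\chi^\nu,\phi^\nu\}_\si(0).
$$

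Combining the two contributions expresses $\dot\la(\hat\om)\|\phi^\nu\|^2$ as a sum over $\si$ of terms involving both $\{\chi^\nu,\phi^\nu\}_\si(0)$ and $|m_{\si\nu}|^2\{\dot e_\si,e_\si\}_\si(0)$. The main obstacle is to show that all contributions from $\si\neq\nu$ cancel, leaving only the diagonal term in $m_{\nu\nu}$; this relies on the explicit representations \eqref{eq:23}--\eqref{eq:23a} of $\psi^\nu$ along the channels. On each $\si\neq\nu$ one has $\psi^\nu(\si(k),\om)=s_{\si\nu}(\om)e_\si(k,\te_\si(\om))$, so $\chi^\nu$ restricted to channel $\si$ is an explicit linear combination of $e_\si(\cdot,\te_\si(\hat\om))$ and its $\om$-derivative; substituting this into $\{\chi^\nu,\phi^\nu\}_\si(0)=-m_{\si\nu}\{e_\si,\chi^\nu\}_\si(0)$ and using bilinearity of the Wronskian, the resulting expression cancels precisely the corresponding $|m_{\si\nu}|^2\{\dot e_\si,e_\si\}_\si(0)$ coming from the channel sum. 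For $\si=\nu$, however, the special representation carries the unmatched incoming wave $e_\nu(k,\te_\nu(\om)^{-1})$, which is regular at $\hat\om$; its surviving contribution to $\chi^\nu$ on channel $\nu$, combined with the formulas $\dot\la(\om)=-b(1-\om^{-2})$ and $\dot\te_\nu(\om)=b(1-\om^{-2})/(b_\nu(1-\te_\nu^{-2}))$, delivers the right-hand side of \eqref{Eq:36}. The appearance of $\bar{\hat\om}^{-2}$ rather than $\hat\om^{-2}$ is the signature of the boundary case $\hat\om\in\T$, where $\bar{\hat\om}^{-2}=\hat\om^2$ and conjugation effectively replaces $\dot\la$ by $\overline{\dot\la}$ in the Wronskian computation, consistently with the real-valuedness of $\|\phi^\nu\|^2$.
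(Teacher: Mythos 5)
Your argument follows the paper's proof in all essentials: \eqref{Eq:39} is read off from Lemma \ref{L:08}, and \eqref{Eq:36} is obtained by applying Lemma \ref{le:02} to the analytic family $(\om-\hat\om)\psi^{\nu}$ (the paper uses the equivalent renormalization $U(k,\om)A(\om)\mathbf{n}_{\nu}$, which agrees with yours at $\hat\om$ and whose extra derivative term drops out by Lemma \ref{le:01}), then cancelling each off-diagonal channel Wronskian against $\dot\la(\hat\om)\sum_{k}|\phi^{\nu}(\si(k))|^{2}$ via Lemma \ref{le:02a}, and extracting the diagonal term from the second, linearly independent solution carried by channel $\nu$. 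The only point you gloss over that the paper treats explicitly is the degenerate case $e_{\nu}(1,\te_{\nu}(\hat\om))=0$, where the representation of $\phi^{\nu}$ along channel $\nu$ must be rewritten using $\{e_{\nu},\dot e_{\nu}\}_{\nu}(0)$ in place of $e_{\nu}(1,\te_{\nu}(\hat\om))$; the resulting Wronskian identity is the same, so this is a routine addition rather than a gap in the method.
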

 
  \begin{proof}
   Relations  \eqref{Eq:39} and \eqref{Eqq:40}  are already established in lemma \ref{L:08}. It remains to prove \eqref{Eq:36}.\\ \\ 
We     apply Lemma \ref{le:02}  with $\xi(\al)= \phi^\nu(\al,\hat\om)  $:
 \beq
 \label{Eq:32}   
    \dot\la(\hat\om)\sum_{\al\in\cA_1} |\phi^\nu(\al,\hat\om)|^2=
            \sum_{\si\in \cC}\gb_\si(0)\{\dot\phi^\nu(\hat\om), \overline{\phi^\nu(\hat\om)} \}_\si(0)
\eeq             
and calculate the Wronskians  in the right-hand side of this equality.\\ \\ 
We then have
 \beq
 \label{Eq:28}
  \phi^\nu(\si(k),\om) = p_\si(k,\om)h_\nu(\om)\delta_{\si,\nu}+e_\si(k,\om)h_{\si\nu}(\om),
  \eeq
with
  \beqq
  \label{Eq:29a}
  h_\nu(\om)=\frac{b_\nu}{\gb_\nu(0)}\frac{\te_\nu(\hat\om)-\te_\nu(\hat\om)^{-1}}{e_\nu(1, \te_\nu(\om))}(\om-\hat \om),  
  \eeqq
and $h_{\si\nu}$ is already defined in \eqref{EQ:01}.\\ \\
Let $e_\nu(1,\te_\nu(\hat\om))\neq 0$. Then, for all $\si,\nu\in \cC$,
    \beq
  \label{Eq:30}
  h_\nu(\hat\om)=0, \ \phi^\nu(\si(k),\hat\om)= e_\si(k,\te_\si(\hat\om))h_{\si\nu}(\hat\om),
  \eeq
  and 
    \beq
    \label{Eq:30a}
  m_{\si\nu}(\hat\om)=h_{\si\nu}(\hat\om), \  \gm(\hat\om)=\left ( m_{\si\nu}(\hat\om)\right )_{\si,\nu\in\cC} 
   \eeq    
We use \eqref{Eq:28}, \eqref{Eq:30}, \eqref{Eq:30a},  \eqref{Eq:39}  and  that $\dot p_k(\om)=0$ for $k=0,1$
as follows from \eqref{eq:52}:

\beq
 \label{Eq:34}
 \dot \phi^\nu(\si(k),\om) = p_\si(k,\om)\dot h_\nu(\om)\delta_{\si,\nu}+\dot e_\si(k,\om)h_{\si\nu}(\om)+
 e_\si(k,\om)\dot h_{\si\nu}(\om), \ k=0,1.
   \eeq     
 Besides 
 \beqq
 \label{Eq:35}
 \dot h_\nu(\hat \om)=\frac{b_\nu}{\gb_\nu(0)}\frac{\te_\nu(\hat\om)-\te_\nu(\hat \om)^{-1}}{e_\nu(1,\te_\nu(\hat\om))}.
 \eeqq 
 Since $e_\nu(1,\te_\nu(\hat\om))\neq 0$, we also have  $h_\nu(\hat\om)=0$ and, according to \eqref{eq:38a07}  and \eqref{Eq:33},

 \begin{multline}
    \label{Eq:35a}
\gb_\si(0)\{\dot\phi^\nu(\hat\om), \overline{\phi^\nu(\hat\om)} \}_\si(0)= 
b_\nu (\te_\nu(\hat\om)-\te_\nu(\hat \om)^{-1})\overline{m_{\si\nu}(\hat\om)} \delta_{\si,\nu}  + \\
         \gb_\si(0)  \{\dot e_\si (\hat\om), e_\si(\te_\si(\hat\om))\}_\si(0) |m_{\si\nu}(\hat\om)|^2 = \\
 b_\nu (\te_\nu(\hat\om)-\te_\nu(\hat \om)^{-1})\overline{m_{\si\nu}(\hat\om)} \delta_{\si,\nu} - 
             \dot\la(\hat\om)\sum_{k=1}^\infty   |\phi^\nu(\si(k), \te_\si(\hat\om))|^2.     
    \end{multline}
  We can now return to   \eqref{Eq:32}  in order to obtain normalization condition \eqref{Eq:36} for the matrix $\gm$:

   \medskip
   
 In the case $e_\nu(1,\hat\om)=0$ representation  \eqref{Eq:28} is still valid, yet 
 \[
 h_\nu(\hat\om)=\frac{b_\nu}{\gb_\nu(0)}\frac{\te_\nu(\hat\om)-\te_\nu(\hat\om)^{-1}}{\{e_\nu,\dot e_\nu\}_\nu(0)}e_\nu(0,\hat\om)\neq 0.
 \]
  Taking \eqref{Eq:33ab} into account   
   we again obtain \eqref{Eq:33}, yet now
  \beqq
  \label{Eq:37}
  m_{\si\nu}(\hat\om)= \frac{b_\nu}{\gb_\nu(0)}\frac{\te_\nu(\hat\om)-\te_\nu(\hat\om)^{-1}}
  {\{e_\nu(\hat\om),\dot e_\nu(\hat\om)\}_\nu(0)}\delta_{\si,\nu}+h_{\si\nu}(\hat\om).
  \eeqq
  Relation  \eqref{Eq:34}  is still valid and for $\nu\neq \si$ we arrive to relation  \eqref{Eq:35a}.\\ \\
For $\si=\nu$ we have
  \begin{multline*}
  \label{Eq:38}
  \gb_\nu(0)\{\dot\phi^\nu(\hat\om), \overline{\phi^\nu(\hat\om)} \}_\nu(0)=
                 \gb_\nu(0)\{\dot e_\nu(\hat\om)h_2(\hat\om),\overline{e_\nu(\hat\om)}\}_\nu(0) \overline{m_{\nu\nu}(\hat{\om})}= \\
         \gb_\nu(0)\{\dot e_\nu(\hat\om),\overline{e_\nu(\hat\om)}\}_\nu(0) |m(\nu,\nu;\om)|^2-
                \ {b_\nu} ({\te_\nu(\hat\om)-\te_\nu(\hat\om)^{-1}})\overline{m_{\nu\nu}(\hat{\om})},                 
        \end{multline*}         
  and we again arrive to \eqref{Eq:35a}.\\ \\  
Now one can complete the proof in the same way as if $e_\nu(1,\te_\nu(\hat\om))\neq 0$.
  
  \end{proof}

\begin{Rem}
The matrix $\overline{\gm(\hat\om)}$ corresponds to the point of discrete spectra $\lambda(\hat{\omega})$. Its columns are normalized eigenfunctions. This normalizations is defined by relation \ref{Eq:36} and is therefore unique.\\
We will see in the next section that  $m_{\nu\nu}(\hat{\omega})$, i.e. the energies of the normalized eigenfunctions, are the quantities which participate in the equations for the inverse scattering problem
\end{Rem}
 %
%
%
%
%
%
%
%
\section{Equations of the inverse scattering problem}\label{sec9}
\subsection{}

 In order to obtain equations of the inverse scattering problem we introduce the matrix function  
\beq
\label{Eq:01}
\Delta_l(\om):= \di\left \{\te_\nu(\om)^{l-1}\frac{d\te_\nu(\om)}{d\om}\right \}_{\nu\in \cC},   \ l\in \Z
\eeq
and consider the integral 
\beqq
\label{Eq:02}
\cJ(l,k) = \left ( j_{\nu\si}(l,k) \right )_{\nu,\si\in \cC}= \frac 1 {2\pi\I} \int_\T \Delta_l(\om) U(k,\om) d\om.
\eeqq

Since $\T$ may contain (simple) poles of $ U(k,\om) $ this integral as well as all integrals in this section is considered in principal value. We will calculate this integral in two ways: through the residues, this would correspond to the contribution of the discrete spectra, and through the scattering coefficients, this would correspond to the contribution of the continuos spectra.\\ \\ 
Comparing two different expressions for $\cJ_{l,k}$ leads one to the equations of the inverse scattering problem.

\medskip

Let as before   $U(k,\om)= (u_{\nu\si}(k,\om))_{\nu,\si}$. Then
\begin{multline}
\label{Eq:02a01}
 j_{\nu\si}(l,k)=  \frac 1 {2\pi\I}\int_\T \theta_\nu(\om)^{l-1}u_{\nu\si}(k,\om)\frac{d\te_\nu(\om)}{d\om}d\om= \\
           \frac 1 {2\pi\I}\int_\T \theta_\nu(\om)^{l-1}\psi^\si_\nu(k,\om)\frac{d\te_\nu(\om)}{d\om}d\om,
\end{multline}
here $\psi^\nu(\cdot, \om)$ is the special solution, defined in \eqref{eq:65}.

\medskip

Assume first that $\si\neq \nu$. Then, according to \eqref{eq:23} and \eqref{eq:23a} 
\beqq
\label{Eq:02a02}
j_{\nu\si}(l,k) =\frac 1 {2\pi\I}\int_\T \theta_\nu(\om)^{l-1}e_\nu(k,\te_\nu(\om))s_{\si\nu}(\om)\frac{d\te_\nu(\om)}{d\om}d\om,
\eeqq
Consider the function 
\beq
\label{Eq:02a03}
q_{\nu\si}(n)=\frac 1 {2\pi\I}\int_\T s_{\si\nu}(\omega) \theta_\nu(\om)^{n-1}\frac {d\te_\nu(\om)}{d\om}d\om.
\eeq
Relation \eqref{eq:13}  now yields
\beq
\label{Eq:02a04}
 j_{\nu\si}(l,k)=c_\nu(k)\sum_{m\geq k} q_{\nu\si}(m+l)a_\nu(k,m),
\eeq
this is the desired expression.

\begin{Rem}
The functions $q_{\nu,\si}$ cannot be determined from the spectral data generally 
speaking. However relation 
\eqref{Eq:02a04} determines the \textbf{structure} of the equation of the inverse scattering problem. Later in section \ref{sec:62} we will use this structure to get rid of the functions which cannot be observed from the spectral data. 
\end{Rem}
Let now $\nu=\si$ and   $T(\om)^{-1}= (\tau_{\si\nu}(\om))_{\si,\nu}$.  Relations  \eqref{eq:64} \eqref{Eq:01}  yield
 \begin{multline}
\label{Eq:05}
j_{\si\si}(k,l)=   \\ \frac1 {2\pi\I} \int_\T\Bigl \{ \bigl [
       -p_\si(k,\om) + e_\si(k,\te_\si)\tau_{\si\si}(\om)
                                                       \bigr ] \frac{b_\si(\theta_\sigma^{-1}-\theta_\sigma)}{\gb_\si(0)e_\si(1,\te_\si)} \te_\si(\om)^{l-1}\frac{d\te_\si(\om)}{d\om} \Bigr \} d\om = \\
     \frac1 {2\pi\I}    \Bigl ( \int_{T_\si^+} +  \int_{T_\si^-}\Bigr) \Bigl \{   \  \cdot \ \Bigr \}   =    j_{\si\si}^+(k,l) + j_{\si\si}^-(k,l).                                        
\end{multline}    
As $\om$ runs over  $T_\si^+$ the function $\te_\si(\om)$ runs over the whole $\T$. Let $\om(\te_\si)$ be the inverse function. 
Relations \eqref{Eq:02a01} together with  \eqref{eq:23} yield
\beqq
\label{Eq:06}
 j_{\si\si}^+(k,l) = \frac1 {2\pi\I} \int_\T \bigl [   e_\sigma(k, \theta_\sigma^{-1}) + s_{\sigma \sigma}(\om(\te_\si))e_\sigma(k, \theta_\sigma)
   \bigr ] \te_\si^{l-1}  d\te_\si.
\eeqq
Besides it follows from corollary \ref{Rem:51}  that $s_{\si\si}(\om)$ is bounded on $\T_\si^+$.\\ 
Consider the Fourier series of $ s_{\sigma \sigma}(\om(\te_\si))$:
\beqq
\label{Eq:07}
 s_{\sigma \sigma}(\om(\te_\si))=\sum_{n=-\infty}^\infty \tilde{s}_\si(n) \te^{-n}; \ 
          \tilde{s}_\si(n) = \frac 1 {2 \pi\I}  \int_\T  s_{\sigma \sigma}(\om(\te_\si)) \te_\si^{n-1}  d \te_\si.
\eeqq
Together with \eqref{eq:13}  this yields
\begin{multline*}
\label{Eq:08}
                                       e_\sigma(k, \theta_\sigma^{-1}) + s_{\sigma \sigma}(\om(\te_\si))e_\sigma(k, \theta_\sigma)=   \\
  c_\si(k) \sum_m \bigl \{ a_\si(k,m) + \sum_n a_\si(k,n) \tilde{s}_\si(m+n) \bigr \} \te_\si^{-m},
\end{multline*}
and finally
\beq
\label{Eq:09}
 j_{\si\si}^+(k,l) = c_\si(k) \left [
                   a_\si(k,l)+\sum_{n=-\infty}^\infty  a_\si(k,n) \tilde{s}_\si(l+n)
                                         \right ]  .
 \eeq                                        
                                                                             
 \medskip

 We now study $j^-_{\si\si}(k,l)$.  Denote
 \beqq
 \label{Eq:10}
 a_1(k,\om)= - p_\si(k,\om) \frac{b_\si}{\gb_\si(0)}e_\si(1,\te_\si)^{-1}(\theta_\sigma^{-1}-\theta_\sigma);
 \eeqq
 \beqq
 \label{Eq:11}
 a_2(\om)=\tau_{\si\si}(\om)\frac{b_\si}{\gb_\si(0)}e_\si(1,\te_\si)^{-1}(\theta_\sigma^{-1}-\theta_\sigma).
 \eeqq 
We then have
 \beqq
 \label{Eq:11a01}
 \psi^\si_\si(k,\om)= a_1(k,\om)+e_\si(k,\te_\si(\om))a_2(\om),
 \eeqq
 and
 \begin{multline*}
 \label{Eq:11a02}
  j_{\si\si}^-(k,l)=  \frac 1 {2 \pi\I} \int_{T_\si^-}  a_1(k,\om) \te_\si(\om)^{l-1}\frac{d\te_\si}{d\om} d\om+ \\
                                \frac 1 {2 \pi\I} \int_{T_\si^-}  a_2( \om) e_\si(k,\te_\si(\om))\te_\si(\om)^{l-1}\frac{d\te_\si}{d\om} d\om.
\end{multline*}                
 As $\om$ runs over $T_\si^-$, the function $\te_\si(\om)$ runs twice  in the opposite directions 
 over  $J_\si=\te_\si(T_\si^-)\subset\R$  and   the values $\te_\si(\om)$ and  
$\te_\si(\om^{-1})$ coinside. Respectively $e_\si(k,\te_\si(\om)), p_\si(k,\om) \in \R$ and 
$e_\si(k,\te_\si(\om))=e_\si(k,\te_\si(\om^{-1}))$, $ p_\si(k,\om) = p_\si(k,\om^{-1}) $, thus
 $a_1(k,\om)=a_1(k,\om^{-1})$.  Besides $ a_2(\om) = \ov{a_2(\om^{-1})}$,    since 
 $\psi^\si(k,\om)= \ov{\psi^\si(k,\om^{-1})}$ for $\omega\in \T$.\\ \\ 
Therefore 
\beq
\label{Eq:13}
  j_{\si\si}^-(k,l)=  \frac 1 {2  \pi\I} \int_{\Gamma_\si}   [ 
  a_2(\om)-a_2(\om^{-1})      ]  e_\si(k,\te_\si)                                                                                 
                                                                        \te_\si^{l-1}\frac{d\te_\si}{d\om} d\om,       
\eeq
where $\Gamma_\si=T_\si^-\cap \C_+$.\\ \\
The function $a_1(k,\om)$ satisfies equation \eqref{eq:02} on the channel $\si$ and 
relations \eqref{eq:66a}, \eqref{eq:66b} yield
\beqq
\label{Eq:13a}
a_1(k,\om)= \te_\si(\om)^{-k}(1+o(1)).
\eeqq
Therefore 

\beqq
\label{Eq:14}
\gb_\si(0) \{\psi^\si(\om),\psi^\si(\om^{-1})\}_\si(0)= b_\si (\te_\si(\om)^{-1}-\te_\si(\om)) 
         [ a_2(\om)-a_2(\om^{-1})].
          \eeqq       
 On the other hand by  \eqref{eq:38}
 \beqq
 \label{Eq:15}
 \gb_\si(0) \{\psi^\si(\om),\psi^\si(\om^{-1})\}_\si(0)=- \sum_{\nu\in \cC\setminus \{\si\}} 
                \gb_\nu(0) \{\psi^\si(\om),\psi^\si(\om^{-1})\}_\nu(0).
 \eeqq
   We have 
   \beqq
   \label{Eq:16}
     \{\psi^\si(\om),\psi^\si(\om^{-1})\}_\nu(0)=0,           \ \om \in T_\nu^-.
     \eeqq
   For $\om\in T_\nu^+\cap T_\sigma^-$ we obtain

  \begin{multline*}
\label{Eq:17}
\gb_\nu(0)\{\psi^\si(\om),\psi^\si(\om^{-1})\}_\nu(0)=  \\
                            \gb_\nu(0) \{s_{\nu\sigma}(\om)e_\nu(\te_\nu(\om)),    s_{\nu\sigma}(\om^{-1})e_\nu(\te_\nu(\om)^{-1})  \}_\nu(0)  \\
                |s_{\nu\sigma}(\om)|^2 \gb_\nu(0)\{ e_\nu(\te_\nu(\om)),    e_\nu(\te_\nu(\om)^{-1})  \}_\nu(0) = \\
 |s_{\nu\sigma}(\om)|^2 b_\nu (\te_\nu(\om)^{-1}-\te_\nu(\om)) =
 - |s_{\si\nu}(\om)|^2 \frac { ( b_\si(\te_\si^{-1}-\te_\si))^2}{b_\nu(\te_\nu^{-1}-\te_\nu)}.    
   \end{multline*}
Now, for  $\te\in J_\si$, we define $\om_\si(\te)=W^{-1}\circ W_\si (\te)$, here the functions $W$, 
   $W_\si$ are defined by \eqref{eq:19}
   and the branch is chosen so that $\om_\si(\te)\in \Gamma_\si$. Let
   \beqq
   N_\si(\te)= \{ \nu; \ \om_\si(\te)\in T_\nu^+ \}, \ \te\in J_\si .
              \eeqq
Denote 
\beqq
\label{Eq:18}
  \Phi_\si(\te)=  \sum_{\nu\in N_\si(\te)}   
         |s_{\si\nu}(\om_\si)|^2  \left (\frac {  b_\si(\te^{-1}-\te) }{b_\nu(\te_\nu(\om_\si)^{-1}-\te_\nu(\om_\si))} \right ), \
        \om_\si(\te)\in T_\si^-. 
\eeqq           
  We finally obtain 
 \beq
   \label{Eq:19}
 a_2(\om)-a_2 (\om^{-1})=  \Phi_\si(\te_\si(\om))  
  \eeq
   this function is expressed via the spectral data.

 \begin{Rem} All functions $|s_{\nu\si}(\om)|$ participating in \eqref{Eq:19} are  well  defined and continuous. In addition,  $s_{\nu\sigma}(\omega^{-1})=\overline{s_{\nu\sigma}}(\omega)$ for $\omega\in\ T_\nu^+$.
\end{Rem}
   Relation \eqref{Eq:13}  now takes the form
   \beqq
   \label{Eq:20}
   j_{\si\si}^-(k,l)= \frac{ 1 }{2\pi\I} \int_{J_\si} e_\si(k,\te) \Phi_\si(\te) \te^{l-1}  d\te,
   \eeqq
   and with account \eqref{eq:13}  we  obtain
   \beq
   \label{Eq:21}
   j_{\si\si}^-(k,l)= c_\si(k)\sum_n a_\si(k,n) q_{\si\si}(n+l),
   \eeq
   where
   \beq
   \label{Eq:22}
   q_{\si\si}(n)= \frac {1 }{2\pi\I} \int_{J_\si} \Phi_\si(\te)\te^{n-1} d\te.
   \eeq
   Combining \eqref{Eq:05}, \eqref{Eq:09},  and     \eqref{Eq:21} we finally obtain
   \beq
   \label{Eq:23}
    j_{\si\si} (k,l) = c_\si(k) \left [
                   a_\si(k,l)+\sum_{n=-\infty}^\infty  a_\si(k,n) \left( \tilde{s}_\si(l+n)+q_{\si\si}(n+l) \right )
                                         \right ],  
 \eeq                                        
the functions $\tilde{s}_\sigma(\cdot)$ and $q_{\sigma,\sigma}(\cdot)$ are defined through the scattering data.
%
%

\subsection{}\label{sec:62}

Let now
\beqq
\label{Eq:40}
\Theta(\om)= \di \{ \te_\si(\om) \}_{\si\in \cC}.
\eeqq
We use the representation  \eqref{eq:13} for $\cE(k,\om)$,
\beqq
\label{Eq:41}
\cE(k,\om)=C(k)\sum_{m=-\infty}^\infty A(k,m)\Theta(\om)^m.
\eeqq
The coefficients $C(k)$ and $A(k,m)$ are diagonal matrices
\beqq
\label{Eq:42}
C(k)=\di\{c_\si(k)\}_{\si\in \cC}, \ A(k,m)=\di\{a_\si(k,m)\}_{\si\in \cC},
\eeqq
and also $A(k,m)=0$ for $m<k$.\\ \\
It follows from \eqref{Eq:02a04}  and \eqref{Eq:23} that
\begin{multline}
\label{Eq:43}
\cJ(l,k)=\frac 1 {2\pi\I}\int_\T \Delta_l(\om)U(k,\om)d\om= \\
     C(k)  \left\{   A(k,l)+\sum_{l=-\infty}^\infty A(k,m) Z(l+m) \right\},
\end{multline}     
  here the integral is taken as a principal value and   the matrix $Z(n)=\left ( z_{\nu,\si}(n) \right )_{\nu,\si\in \cC}$ is given by the relations
  \begin{gather}
  z_{\nu\si}(n)=q_{\nu\si}(n), \ \nu\neq \si,  
  \label{Eq:44} \\
   z_{\si\si}(n)=\tilde s_\si(n)+q_{\si\si}(n), 
    \label{Eq:45}
    \end{gather} 
    here $\tilde s_\si(n)$ are the Fourier coefficients of the reflection coefficient $s_{\si\si}$ with respect to $\theta_\sigma$ and the function $q_{\nu\si}$ is defined in \eqref{Eq:02a03}  and \eqref{Eq:22}.

  \medskip
  
  Let now $\Omega\subset \bar \D$ be the set of all poles of $U(k,\om)$. It follows from \eqref{eq:64} that $\overline{U(k,\omega)}=U(k,\bar{\omega})$, thus  for each $\hat\om\in \Omega\cap \T$ the point $\bar{\hat\om}$ also belongs to $\Omega$. Moreover, for such $\hat\om$ we have $\overline{\text{Res}_{\hat{\omega}}U(k,\omega)}=\text{Res}_{\bar{\hat{\omega}}}U(k,\omega)$. 
Denote $\tilde \Omega = \{\hat\om \in \Omega: |\hat\om| < 1\} \cup \{\hat\omega\in \T\cap \Omega: \Im \hat\omega >0\}$. We use \eqref{Eq:39}  and \eqref{Eq:01}:

\beqq
 \label{Eq:46}
  \cJ(k,l)=\sum_{\hat\om\in \tilde\Omega} \Delta_l(\hat\om)\cE(k,\hat\om)\gm(\hat\om)=
       C(k)\sum_mA(k,m)M(m+l),
   \eeqq
     where
     \beq
     \label{Eq:47}
   M(n)=\sum_{\hat\om\in \tilde\Omega} \di\left \{ \frac {d\te_\si}{d\om}(\hat\om) \right \}_{\si\in \cC}
    \Theta(\hat\om)^{n-1}\Re\gm(\hat\om).
   \eeq   
     By compare this to \eqref{Eq:43} and taking into account that $A(k,m)=0$ for $m<k$ and $A(k,k)=I$, we obtain a system of equations
     \beqq
     \label{Eq:48}
     A(k,m)+\sum_{s=k}^\infty A(k,s)F(s+m)=0,\quad  m=k+1,k+2, \ldots \,
     \eeqq
     where 
     \beq
     \label{Eq:49}
     F(n)=Z(n)-M(n).
     \eeq
     Since $A(k,k)=I$, this relation can be written as 
     \beq
     \label{Eq:50}
     F(k+m)+A(k,m)+\sum_{s=k+1}^\infty A(k,s)F(s+m)=0, \quad k=1,2, \ldots,\quad  m>k. 
     \eeq   
     The matrices  $A(k,m)$ are diagonal. The diagonal elements of $F(n)$, can be expressed through the spectral data. The non-diagonal elements of the matrices $F(n)$ vanish, as follows from the lemma below.
 \begin{Lemma}
 \label{L:7.1}
The matrices $F(n)$ are diagonal for all $n\geq 1$: $F(n)=\di\{f_\si(n)\}_{\si\in \cC}$.   and their diagonal elements
$f_\sigma(n)$ are determined by $a_\sigma(k,m)$, $m>k\geq \Big [\frac{n-1}2\Big ]$ only.
 \end{Lemma}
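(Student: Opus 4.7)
My plan is to prove the two claims separately: first I would show that the off-diagonal entries of $F(n)$ vanish by a residue computation, and then deduce the locality property of the diagonal entries algebraically from the system \eqref{Eq:50}.

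For the off-diagonal vanishing, fix $\nu\neq\sigma$; by \eqref{Eq:49} and \eqref{Eq:44}, $F_{\nu\sigma}(n) = q_{\nu\sigma}(n) - M_{\nu\sigma}(n)$. The identification $u_{\nu\sigma}(k,\omega) = s_{\nu\sigma}(\omega)\,e_\nu(k,\theta_\nu(\omega))$ for $\nu\neq\sigma$ shows that $s_{\nu\sigma}$ is meromorphic in a vicinity of $\overline{\D}$ with simple poles contained in $\Omega$, and \eqref{Eqq:40} yields $\Res_{\hat\omega}s_{\nu\sigma} = m_{\nu\sigma}(\hat\omega)$. I would then apply the residue theorem to $q_{\nu\sigma}(n)$ — with the principal-value convention contributing half of each boundary residue — producing a sum over all poles in $\overline{\D}$. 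By Lemma \ref{L:07} the interior poles lie on the real segment $(-1,1)$, where $\theta_\nu$, $d\theta_\nu/d\omega$ and $m_{\nu\sigma}$ are all real. Boundary poles pair into complex conjugates $\hat\omega,\bar{\hat\omega}$ via the symmetry $U(k,\bar\omega)=\overline{U(k,\omega)}$, so their two half-residues combine into the real part of the expression at the representative $\hat\omega\in\tilde\Omega$. Combined with Corollary \ref{Rem:51}, which kills contributions from poles in $T_\nu^+\cup T_\sigma^+$, the resulting sum matches \eqref{Eq:47}, giving $F_{\nu\sigma}(n)=0$.

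Given that $F$ is diagonal, the system \eqref{Eq:50} decouples into the scalar identities
\begin{equation*}
f_\sigma(k+m) + a_\sigma(k,m) + \sum_{s=k+1}^{\infty} a_\sigma(k,s)\,f_\sigma(s+m) = 0,\qquad k\geq 1,\ m>k.
\end{equation*}
For fixed $n$, the largest $k$ with $m:=n-k>k$ is $k = \Big[\frac{n-1}{2}\Big]$; solving for $f_\sigma(n)$ then gives
\begin{equation*}
f_\sigma(n) = -a_\sigma(k, n-k) - \sum_{s=k+1}^{\infty} a_\sigma(k,s)\,f_\sigma(s + n - k).
\end{equation*}
All $a$-terms on the right carry first index $k=\Big[\frac{n-1}{2}\Big]$, while the $f$-terms in the sum are evaluated at indices $s+n-k\geq n+1$. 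Iterating this identity upward in $n$ — convergence of the infinite tail being secured by the decay of $a_\sigma(k,\cdot)$ together with $f_\sigma(N)\to 0$ — expresses $f_\sigma(n)$ as a formula involving only $a_\sigma(k',m)$ with $k'\geq\Big[\frac{n-1}{2}\Big]$ and $m>k'$, as claimed.

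The main obstacle will be the residue bookkeeping in the first part: one must verify that the principal-value half-residues on $\T$, after pairing conjugate poles, assemble exactly into the $\Re\gm(\hat\omega)$ structure of \eqref{Eq:47}, and handle potential exceptional points where $e_\nu(k,\theta_\nu(\hat\omega))$ vanishes, using the refined residue analysis developed in the proof of Lemma \ref{L:09}.
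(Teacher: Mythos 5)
The paper gives no proof of this lemma at all --- it is quoted from Lemma 5.1 of \cite{LM} --- so your proposal has to stand on its own. Your second part (the locality of $f_\si(n)$) is correct and is the classical Marchenko argument: once $F$ is known to be diagonal, evaluating \eqref{Eq:6.18a} at $k=\big[\frac{n-1}{2}\big]$, $m=n-k$ (note $m>k$ indeed holds for this choice) and iterating expresses $f_\si(n)$ through $a_\si(k',m)$ with $m>k'\ge\big[\frac{n-1}{2}\big]$ only; convergence and uniqueness of the iteration follow from $\lim_{k}\sum_{m>k}|a_\si(k,m)|=0$ and the boundedness of $f_\si$. Your first part also picks the right tool: residue calculus applied to $q_{\nu\si}(n)$, the identification $\Res_{\hat\om}s_{\nu\si}=m_{\nu\si}(\hat\om)$ coming from \eqref{Eq:39}, reality of the interior poles (they lie in $(-1,1)$, where $\te_\nu$, its derivative and $\gm$ are all real), and Corollary \ref{Rem:51} on the boundary. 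You should also record that the integrand of $q_{\nu\si}(n)$ is regular at $\om=0$ for all $n\ge 1$ (because $\tau_{\si\nu}(\om)=O(\om^2)$ off the diagonal, so $s_{\si\nu}$ is bounded there); this matters because $0\notin\tilde\Omega$, so an origin residue would have nothing in $M_{\nu\si}(n)$ to cancel against.

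The genuine gap is the case you defer to ``residue bookkeeping'': boundary poles $\hat\om\in\T\cap\Omega$ with $\hat\om\in T_\nu^-\cap T_\si^-$. Corollary \ref{Rem:51} does not touch these (it only kills rows and columns indexed by channels that are \emph{open} at $\hat\om$), and they are precisely the new feature of the step-like setting: in \cite{LM} every $T_\gamma^+$ equals $\T$, so every boundary residue vanishes and this case is vacuous, whereas here it occurs as soon as three or more channels have genuinely different bands. For such a pole the paired principal-value half-residues contribute $\te_\nu(\hat\om)^{n-1}\,\Re\big[\te_\nu'(\hat\om)\,m_{\nu\si}(\hat\om)\big]$ to $q_{\nu\si}(n)$, while \eqref{Eq:47} prescribes $\te_\nu(\hat\om)^{n-1}\,\te_\nu'(\hat\om)\,\Re\, m_{\nu\si}(\hat\om)$. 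Since $\te_\nu$ maps the arc $T_\nu^-$ into the real segment $(-1,1)$, the tangential derivative $\I\hat\om\,\te_\nu'(\hat\om)$ is real but $\te_\nu'(\hat\om)$ itself is not; the two expressions therefore do not coincide termwise without an additional identity pinning down the phase of $m_{\nu\si}(\hat\om)$ (equivalently of $\Res_{\hat\om}\tau_{\nu\si}$) at an embedded eigenvalue whose eigenfunction decays along both $\nu$ and $\si$. That phase identity is the actual content of the off-diagonal cancellation in this paper's setting; flagging it as ``the main obstacle'' is honest, but until it is supplied the proof that $F_{\nu\si}(n)=0$ is not complete.
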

  This statement is proved in \cite{LM}, (Lemma 5.1) and we omit the proof.

\begin{Th}
\label{Thm:5.1}
The following properties hold for the systems  under consideration

1) Equations \eqref{Eq:50} split into a system of independent scalar equations
\beq
\label{Eq:6.18a}
f_\nu(k+m)+a_\nu(k,m)+\sum_{s=k+1}^\infty a_\nu(k,s)f_\nu(s+m)=0, \,
m\geq k+1\geq 1,
\eeq
here $f_\nu(n)$ is defined in \eqref{Eq:49}, \eqref{Eq:47},  \eqref{Eq:45}.

 \medskip

2) For each $k\geq 0$ equations \eqref{Eq:6.18a} has unique solutions $a_\sigma(k,m)$.
\end{Th}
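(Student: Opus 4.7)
For part (1), the splitting is immediate. By Lemma \ref{L:7.1} each $F(n) = \di\{f_\sigma(n)\}_{\sigma \in \cC}$ is diagonal, and by construction the matrices $A(k,m) = \di\{a_\sigma(k,m)\}_{\sigma \in \cC}$ are diagonal as well. Reading off the $\nu$-th diagonal entry of the matrix equation \eqref{Eq:50} yields exactly the scalar equation \eqref{Eq:6.18a}, with no coupling between distinct channels.

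For part (2), the plan is to view \eqref{Eq:6.18a} as a Fredholm equation of the second kind on $\ell^2(\mathbb{N})$. Fix $k \geq 0$ and $\nu \in \cC$, and set $v(m) := a_\nu(k,k+m)$ for $m \geq 1$, so that \eqref{Eq:6.18a} reads
\begin{equation*}
v(m) + \sum_{s \geq 1} v(s)\, f_\nu(2k + m + s) = -f_\nu(2k + m), \qquad m \geq 1,
\end{equation*}
or $(I + \mathcal{K}_{k,\nu}) v = -g$. The hypothesis $\sum_n (1+\epsilon)^n(|\ga_\sigma(n) - a_\sigma| + |\gb_\sigma(n) - b_\sigma|) < \infty$ transfers, via the Jost expansion \eqref{eq:13} and the definition of $f_\nu$ through \eqref{Eq:49}, \eqref{Eq:47}, \eqref{Eq:45}, to exponential decay of $f_\nu(n)$. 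Hence the kernel $(m,s) \mapsto f_\nu(2k + m + s)$ is square summable, the operator $\mathcal{K}_{k,\nu}$ is Hilbert--Schmidt (in particular compact), and the Fredholm alternative reduces existence and uniqueness to injectivity of $I + \mathcal{K}_{k,\nu}$.

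The injectivity step is the main obstacle, and I would handle it by a positivity argument. Writing $f_\nu = f_\nu^c - f_\nu^d$ for the continuous and discrete parts coming from $Z(n)$ and $M(n)$ in \eqref{Eq:49}, I would express the quadratic form $\langle u,u\rangle + \langle \mathcal{K}_{k,\nu} u, u\rangle$ as a sum of two non-negative pieces: an integral over $\T$ against the positive weight built from $|s_{\nu\nu}|^2$ on $T_\nu^+$ and $\sum_{\gamma \neq \nu}|s_{\gamma\nu}|^2$ on $T_\nu^-$ via \eqref{Eq:19} (the symmetry $s_{\nu\nu}(\omega^{-1}) = \overline{s_{\nu\nu}(\omega)}$ from Corollary \ref{Cor:51} ensures this weight is real and non-negative on $\T$), plus point contributions at the discrete eigenvalues with weights $-m_{\nu\nu}(\hat\omega)$, positive by \eqref{Eq:36}. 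If $u$ lies in the kernel of $I + \mathcal{K}_{k,\nu}$, this non-negative quantity vanishes, and standard arguments (vanishing of a positive measure tested against an analytic polynomial and of each point mass) force $u = 0$. The technical core is verifying that the continuous and discrete contributions assemble into a single non-negative form; here the symmetry relation of Lemma \ref{L:6} together with Lemmas \ref{L:08}--\ref{L:09} provide the required bookkeeping. Once injectivity is in hand, the Fredholm alternative closes the proof.
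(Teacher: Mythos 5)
Your part (1) is exactly the paper's argument: diagonality of $F(n)$ from Lemma \ref{L:7.1} together with diagonality of the $A(k,m)$ reduces \eqref{Eq:50} to the scalar equations \eqref{Eq:6.18a}, so there is nothing to add there.

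For part (2) you take a genuinely different route. The paper's proof is a reduction: Lemma \ref{L:7.1} also shows that $f_\nu(n)$ is determined by the coefficients $\ga_\nu(k),\gb_\nu(k)$ of the single channel $\nu$ alone, so \eqref{Eq:6.18a} is literally the classical Marchenko equation for the inverse scattering problem for \eqref{eq:02} on a half-line with the Dirichlet condition $\xi(\nu(0))=0$, and unique solvability is then quoted from \cite{VA}. You instead set out to prove solvability from scratch via the Fredholm alternative for a Hilbert--Schmidt perturbation of the identity plus a positivity argument for injectivity; this is in substance the standard proof of the very result the paper cites, so the strategy is sound, and what it buys is self-containedness. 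The price is that you must redo, in the step-like multichannel setting, bookkeeping that the reduction avoids, and two of these items are more than routine. First, compactness: square summability of the kernel $(m,s)\mapsto f_\nu(2k+m+s)$ needs genuine decay of $f_\nu(n)$, and the continuous contribution $\tilde s_\nu(n)$ consists of Fourier coefficients of $s_{\nu\nu}$, which the paper only establishes to be continuous on $T_\nu^+$; your asserted exponential decay requires tracking the analytic continuation of the scattering coefficients through the band edges and the points of $\Omega\cap\T$ where the integrals are only principal values. Second, positivity of the assembled quadratic form is not automatic: the weight $\Phi_\nu$ on $J_\nu$ entering \eqref{Eq:19} mixes ratios $b_\nu(\te^{-1}-\te)/\bigl(b_\gamma(\te_\gamma^{-1}-\te_\gamma)\bigr)$ whose reality and sign must be verified, and positivity of the discrete weights requires checking the sign of the prefactor in \eqref{Eq:36}, not merely invoking the identity. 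As written, your injectivity step is a plan rather than a proof; if you want the argument self-contained, the cleanest course is still to perform the paper's reduction first and then run your Fredholm--positivity argument channel by channel in the classical single-channel normalization, where these sign conventions are settled.
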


\begin{proof}
The first statement follows directly from relation \eqref{Eq:49} which defines $F(n)$  and Lemma \ref{L:7.1}.
It follows from the same lemma that the functions $f_\nu(n)$, $n\geq 1$ are uniquely defined by the coefficients $\gb_\nu(k)$, $\ga_\nu(k)$, $k\geq 1$.
Therefore  \eqref{Eq:6.18a} coincide with equations for inverse scattering problem for equation \eqref{eq:02} with boundary condition $\xi(\nu(0))=0$.
It is well-known (see for example \cite{VA}) that the later has unique solution. 
\end{proof}
   

\section{Concluding remarks}\label{sec10}
In case that the continuos spectra $I=\cup_{\sigma\in\mathcal{C}}[a_\sigma-2b_\sigma,a_\sigma+2b_\sigma]$ splits into a number of disjoints intervals, one can repeat the procedure  separately for each connected component of $I$.\\
If, say, $I^{(0)}$ is  a connected component of $I$ and $\sigma$ is a channel corresponding to this component, then each wave incoming along $\sigma$ generates decaying waves in all channels which correspond to other connected components of $I$. We omit the details.\\ \\
So far we have discussed reconstruction of the part $\mathcal{A}_0$ of the whole system, that is, the channels. This information is, generally speaking, insufficient for reconstruction the whole matrix $\cL$. However if the matrix $\cL_1$ corresponding to the "central" part of the system is sufficiently sparse and also we know the matrix $B(0)=\text{diag}\{\gb_\sigma(0)\}_{\sigma\in\mathcal{C}}$ which realizes connections between the channels and the central part of the system, the whole matrix $\cL$ can be recovered from the scattering data. We refer the reader to Chapter 11 in \cite{MS}, where  statemets of such type are obtained.


\begin{thebibliography}{99}
  
\bibitem{AM} Z.S. Agranovich, V.A. Marchenko,  {\em The inverse problem of scattering theory},   Gordon and Breach Science Publishers, New York-London 1963 xiii+291 pp.
 
\bibitem{BF}  V.S. Buslaev, V.N. Fomin, An Inverse Scattering Problem for the One-Dimensional Schr{\"o}odinger Equation on the Entire Axis,  {\em Vestnik Leningrad. Univ.}, 17, (1962), 56 -- 64
 
 
\bibitem{C} R. Carlson, Inverse eigenvalue problems on directed graphs, Trans. Amer. Math. Soc., 351 (1999), no. 10, 4069 -- 4088.
   
\bibitem{CK} A. Cohen, T. Kappeler,  Scattering and inverse scattering for steplike potentials in the Schr\"odinger equation. {\em Indiana Univ. Math. J.}, 34 (1985), no. 1, 127 -- 180. 


\bibitem{EMT} I. Egorova, J. Michor, G. Teschl, Scattering theory for Jacoby operators with general steplike quasi-periodic background, {\em Zh. Mat. Fiz. Anal. Geom.} 4-1, 33 -- 62 (2008).

\bibitem{GP} Gerasimenko, N. I.; Pavlov, B. S. A scattering problem on noncompact graphs. (Russian) {\em Teoret. Mat. Fiz.} 74 (1988), no. 3, 345--359; translation in {\em Theoret. and Math. Phys.}, 74 (1988), no. 3, 23D-240. 

\bibitem{GNP}  F. Gesztesy, R. Nowell, and W. P{\"o}tz,  One-Dimensional Scattering Theory for Quantum Systems with Nontrivial Spatial Asymptotics, {\em Dfferential and  Integral Equations} ,10, (1997),  521 -- 546.

\bibitem{GS} B.Gutkin,  U.Smilansky, Can one hear the shape of a graph? {\em J. Phys. A, } 34:31 (2001), 6061-6068.


\bibitem{K} P. Kuchment, Quantum graphs. II. Some spectral properties of quantum and combinatorial graphs. {\em J. Phys. A }, 38 (2005), no. 22, 4887 -- 4900.


\bibitem{KN} P. Kurasov, M. Nowaczyk, Inverse spectral problem for quantum graphs, {\em J. Phys. A} 38 (2005), 4901 -- 4915.

\bibitem{KS}  V. Kostrykin, R.Schrader, The inverse scattering problem for metric graphs and the traveling salesman problem, {\em ArXiv, math-ph/0603010}, pp.1 --  68, 2006.


\bibitem{LM} Yu. I. Lyubarskii, V. A. Marchenko, Direct and inverse problems of multichannel scattering (Russian). {\em Funktsional. Anal. i Prilozhen.} 41 (2007), 58 -- 77, ( Translation in Funct. Anal. Appl. 41 (2007), no. 2, 126 --142 )


\bibitem{LM1} Yu. I. Lyubarskii, V.A. Marchenko,   Inverse problem for small oscillations,  {\em Spectral analysis, differential equations and mathematical physics: a festschrift in honor of Fritz Gesztesy's 60th birthday}, 263 -- 290, Proc. Sympos. Pure Math., 87, Amer. Math. Soc., Providence, RI, 2013.

\bibitem{VA} V.A.Marchenko,  {\em Introduction to the theory of inverse problems of spectral analysis}, (Russian), Kharkov, 2005, pp.1--135.

\bibitem{MS} V.A.Marchenko, V.V.Slavin, {\em Inverse problems it theory of small oscillations}, (Russian), Kiev, Naukova Dumka, 2015, 219 pp.

\bibitem{N} R. G. Newton, {\em Scattering theory of waves and particles},   Dover Publications, Inc., Mineola, NY, 2002, pp. xx+745

\bibitem {PF}  B.S.Pavlov, M.D. Faddeev, A model of free electrons and the scattering problem. (Russian), {\em Teoret. Mat. Fiz. } 55 (1983), no. 2, 257 -- 268. 

\bibitem{Te}  G.Teschl, {\em Jacobi operators and completely integrable nonlinear lattices},  American Mathematical Society, Providence, RI, 2000,  xvii+351 pp. 

\end{thebibliography}
\end{document}